\documentclass[reqno]{amsart}
\usepackage{amscd,amssymb,amsmath,amsfonts,amsthm,latexsym}
\usepackage{extpfeil}
\usepackage[latin1]{inputenc}
\usepackage[all]{xy}

\newcommand{\A}{\mathrm A}

\def\t{\otimes}
\def \Im{\mathop{\sf Im}\nolimits}

\DeclareMathOperator{\Ker}{\sf Ker}

\def \Hom{\mathop{\sf Hom}\nolimits}

\def \Zenter{\mathop{\sf Z}\nolimits}
\def \universal{\mathop{\sf U}\nolimits}

\newcommand{\conm}[1]{\{#1, #1\}}

\newcommand{\dka}{\texttt{Der}_K(\A)}
\newcommand{\lle}{\mathcal{L}}

\newcommand{\lrak}{\rm \texttt{LR}_{A K}}
\newcommand{\acoma}{\rm \texttt{AComm}_{\A}}
\newcommand{\liek}{\rm \texttt{Lie}_{K}}
\newcommand{\liea}{\rm \texttt{Lie}_{\A}}

\newcommand{\uea}{\universal_{\A}}
\newcommand{\lrf}{\rm \texttt{LR}}
\newcommand{\zena}{\Zenter_{\A}}
\newcommand{\uce}{\mathfrak{u}}
\newcommand{\ucea}{\mathfrak{uce}_\A}
\newcommand{\Vect}{_K{\bf mod}}

\newcommand{\derrin}{\mathfrak{Der}}

\DeclareMathOperator{\Rin}{Rin}
\DeclareMathOperator{\Lie}{Lie}
\DeclareMathOperator{\Der}{Der}
\DeclareMathOperator{\End}{End}
\DeclareMathOperator{\IDer}{IDer}
\DeclareMathOperator{\ab}{ab}
\DeclareMathOperator{\ad}{\sf ad}
\DeclareMathOperator{\MM}{\mathcal{M}}
\DeclareMathOperator{\modu}{mod}
\DeclareMathOperator{\Poiss}{Poiss}

\newtheorem{Th}{Theorem}[section]
\newtheorem{Pro}[Th]{Proposition}
\newtheorem{Le}[Th]{Lemma}
\newtheorem{Co}[Th]{Corollary}
\theoremstyle{definition}
\newtheorem{De}[Th]{Definition}
\newtheorem{Ex}[Th]{Example}
\theoremstyle{remark}
\newtheorem{Rem}[Th]{Remark}

\begin{document}

\title{Universal central extensions of Lie--Rinehart algebras}
\author{J. L. Castiglioni}
\address{[J. L. Castiglioni] Dpto. Matem\'atica, Facultad de Ciencias Exactas Univ. Nacional de La Plata and CONICET, 1900 La Plata, Argentina}
\email{jlc@mate.unlp.edu.ar}
\author{X. García--Martínez}
\address{[X. García--Martínez] Department of Algebra, University of Santiago de Compostela, 15782, Spain.}
\email{veigano@gmail.com}
\author{M. Ladra}
\address{[M. Ladra] Department of Algebra, University of Santiago de Compostela, 15782, Spain.}
\email{manuel.ladra@usc.es}

\thanks{The  first author was supported by Grant PIP 112-201101-00636-CONICET. The  second and third authors were supported by Grant of Xunta de Galicia GRC2013-045 (European FEDER support included).}

\begin{abstract}
In this paper we study the universal central extension of a Lie--Rinehart algebra and we give a description of it.  Then we study the lifting of automorphisms and derivations to central extensions. We also give a definition of a non-abelian tensor product in Lie--Rinehart algebras based on the construction of Ellis of non-abelian tensor product of Lie algebras. We relate this non-abelian tensor product to the universal central extension.

\end{abstract}
\subjclass[2010]{17B55}
\keywords{Lie--Rinehart algebra; central extensions; universal central extension; non-abelian tensor product}

\maketitle

\section{Introduction}
Let A be a unital commutative algebra over a commutative ring $K$ with unit. A
Lie--Rinehart algebra is a Lie $K$-algebra, which is also an
$\A$-module and these two structures are related in an appropriate
way \cite{Hue}. The leading example of Lie--Rinehart algebras is
the set $\dka$ of all $K$-derivations of A. Lie--Rinehart algebras
are the algebraic counterpart of Lie algebroids \cite{Mac}.

The concept of Lie--Rinehart algebra generalizes the notion of Lie algebra. In \cite{Gar} universal central extensions of Lie algebra are studied, proving that if a Lie algebra is perfect then it has a universal central extension. Moreover, it is characterized the kernel of the universal central extension as de second homology group with trivial coefficients. In this paper we extend this study to Lie--Rinehart algebras.

On the other hand, in \cite{Ell1} a non-abelian tensor product of Lie algebras is introduced, its more important properties are studied and it is related to the universal central extension. In this paper we extend this construction to a non-abelian tensor product of Lie--Rinehart algebras, we study some important properties and we relate it to the universal central extension of Lie--Rinehart algebras.

After the introduction, the paper is organized in four sections. In Sec. \ref{S:Prel}, we
recall some needed notions and facts on Lie--Rinehart algebras, actions, crossed
modules, universal enveloping algebras, free algebras, homology and cohomology and abelian extensions of Lie--Rinehart algebras.
 In Sec. \ref{S:uce}, following Neher's paper on Lie superalgebras \cite{Neh},  we introduce central extensions and universal central extensions of Lie--Rinehart algebras giving a characterization of them (Theorem ~\ref{T:ucemain}), extending classic results of Lie algebras (see \cite{Gar}).
We construct an endofucntor $\ucea$ that when the Lie--Rinehart algebra is perfect gives explicitly the universal central extension.
In Sec. \ref{S:lift}, we study the lifting of automorphisms and derivations to central extensions.
Finally, in Sec. \ref{S:tensor}, we introduce a non-abelian tensor product of Lie--Rinehart algebras extending Ellis \cite{Ell1} non-abelian tensor product of Lie algebras. We relate this non-abelian tensor product with the universal central extension.

\section{Preliminaries on Lie--Rinehart algebras} \label{S:Prel}
Most of the content of this
section is well known, or follows from known results. We included it in
order to fix terminology, notations and main examples. In what
follows we fix a unital commutative ring $K$. All modules are
considered over $K$. We write $\t$ and $\Hom$ instead of $\t_K$
and $\Hom_K$.

\subsection{Definitions, Examples}
 Let A be a unital commutative algebra over
$K$. Then the set  $\dka$ of all
$K$-derivations of A is a Lie $K$-algebra and an $\A$-module
simultaneously. These two structures are related by the  following
identity
\[
[D,aD'] = a[D,D'] +D(a)D', \quad  D, D' \in \dka.
\]
This leads to the notion below,
which goes back to Herz under the name ``pseudo-alg\`ebre de Lie''
and which is the algebraic counterpart of the Lie algebroid
\cite{Mac}.
\begin{De} A \emph{Lie--Rinehart $\A$-algebra} consists of a Lie $K$-algebra
$L$ together with an $\A$-module structure on $L$ and a morphism, called the \emph{anchor} map,
\[\alpha \colon L \to \dka ,\]
 which is simultaneously a Lie algebra
and $\A$-module  homomorphism such that \[[x,ay] = a[x,y]+x(a)y.\]
Here $x,y \in L$, $a \in \A$ and we  write $x(a)$ for
$\alpha(x)(a)$ \cite{Hue}. These objects are also known as $(K,\A)$-Lie algebras \cite{Rin} and $d$-Lie rings \cite{Pal}.
\end{De}
 Thus  $\dka$ with $\alpha={\sf Id}_{\dka}$ is a
Lie--Rinehart $\A$-algebra. Let us observe that Lie--Rinehart
A-algebras with trivial homomorphism $\alpha \colon L \to \dka$ are
exactly Lie $\A$-algebras. Therefore the concept of Lie--Rinehart
algebras generalizes the concept of Lie A-algebras. If $\A = K$, then
$\dka$=0 and there is no difference between Lie and Lie--Rinehart
algebras. If $L$ is an A-module, then $L$ is a  trivial Lie--Rinehart $\A$-algebra, that is $L$ itself endowed with trivial Lie bracket and trivial anchor map.

If $L$ and $L'$ are Lie--Rinehart algebras, a \emph{Lie--Rinehart $\A$-algebra homomorphism} $f \colon L \to L'$ is a map, which is simultaneously a
Lie $K$-algebra homomorphism and a homomorphism of $\A$-modules. Furthermore it has to conserve the action on $\dka$, in other words the diagram
\[
\xymatrix{
L \ar[rr]^{f} \ar[dr]_-{\alpha} &  &    L' \ar[dl]^-{\alpha'}    \\
                          & \dka               }
\]
commutes.
We denote by $\lrak$ the category of
Lie--Rinehart A-algebras. We have the full inclusion
\[
\liea \subset \lrak,
\]
where $\liea$ denotes the category of Lie $\A$-algebras.

It is important to see that the product in this category is not the cartesian product. For two Lie--Rinehart algebras $L$ and $M$,
 the product in $\lrak$ is $L \times_{\dka} M = \{(l, m) \in L \times M : l(a) = m(a) \text{ for all } a \in \A \}$,
  where $L \times M$ denotes the cartesian product, with the action $(l, m)(a) = l(a) = m(a)$ for all $a \in \A$.

Note also that the category  $\lrak$  does not have zero object. This way, when we speak about a short exact sequence $I \to E \to L$ in $\lrak$, we mean that the first homomorphism is injective and the second is surjective.

Let $L$ be a Lie--Rinehart $\A$-algebra. A Lie--Rinehart subalgebra $M$ of $L$ is a $K$-Lie subalgebra which is an $\A$-module, with action induced by the inclusion in $L$.
If $M$ and $N$ are two Lie--Rinehart subalgebras of $L$, we define the \emph{commutator} of $M$ and $N$, denoted by $\{M, N\}$ as
the \emph{span} as a $K$-module of the elements of the form $a[x, y]$ where $a \in \A$, $x \in M$ and $y \in N$.
Given a subalgebra $M$ of $L$ we say that is an ideal if $M$ is $K$-Lie ideal of $L$ and the action induced by the inclusion is the trivial action.
 An example of ideal
 is the kernel of a Lie--Rinehart homomorphism. Another example is the center of a Lie--Rinehart algebra, defined by
\[
\zena(L) = \{ x\in L : [ax, z] = 0 \ \text{ for all } a \in \A, z \in L \}.
\]

Note that $\{L,L\}$ is a subalgebra of $L$ but it is not an ideal of $L$. We denote by $L^{\ab}$ the A-module $L/\{L,L\}$.

\begin{Ex} \label{trans} If ${\mathfrak g}$ is a $K$-Lie algebra acting
on a commutative $K$-algebra A by derivations (that is, a
homomorphism of Lie $K$-algebras
 $\gamma \colon {\mathfrak g} \to \dka$ is
given), then \emph{the
 transformation Lie--Rinehart algebra
of} $({\mathfrak g}, \A)$ is $L  = \A \otimes {\mathfrak
g}$ with the Lie bracket
\[
[a\otimes g,a'\otimes g'] : = aa'
\otimes [g,g']+a\gamma(g)(a')\otimes g'-a'\gamma(g')(a)\otimes g,
\]
where $a, a' \in \A$, $g, g' \in \mathfrak g$
and the action $\alpha \colon L \to \dka$ is given by $\alpha(a
\otimes g)(a') = a\gamma(g)(a')$.
\end{Ex}
\begin{Ex}\label{E:atiyah}
Let $\MM$ be an A-module. The \emph{Atiyah algebra} $\mathcal{A}_{\MM}$  of $M$ is the Lie--Rinehart A-algebra
whose elements are pairs $(f,D)$ with $f \in \End_K(\MM)$ and $D \in \dka$ satisfying the following property:
\[f(am)= af(m) +D(a)m, \quad a \in \A, m\in \MM. \]
$\mathcal{A}_{\MM}$ is a Lie--Rinehart A-algebra  with the Lie bracket
 \[ [(f,D),(f',D')]=([f,f'],[D,D'])\]
 and anchor map $(f,D) \mapsto  D$ (see \cite{Kal}).
\end{Ex}
\begin{Ex}
Consider the
$K$-algebra of dual numbers,
\[
\A=K[\varepsilon]=K[X]/(X^2)=\big\{ c_1+c_2 \varepsilon \mid c_1,c_2\in K,\varepsilon^2=0\big\}.
\]
 We can endow to A with the Lie algebra structure given by the bracket:
\[
[c_1+c_2\varepsilon,c'_1+c'_2 \varepsilon]=(c_1c'_2-c_2c'_1)\varepsilon, \quad c_1+c_2 \varepsilon,c'_1+c'_2 \varepsilon \in \A.
\]
Thus A is a  Lie--Rinehart A-algebra with anchor map $\alpha \colon \A \rightarrow  \dka$,
 $c_1+c_2 \varepsilon\mapsto  \ad_{c_1}$, where
 $\ad_{c_1}(c'_1+c'_2\varepsilon)=[c_1,c'_1+c'_2\varepsilon]$ is the adjoint map of $c_1$.
\end{Ex}

\begin{Ex}
The $\A$-module $\dka \oplus\A$ is a  Lie--Rinehart A-algebra with the  bracket
\[
[(D,a),(D',a') ]=\big([D,D'],D(a')-D'(a)\big),
\]
and anchor map  $\pi_1 \colon \dka  \oplus \A\rightarrow \dka$,  the projection onto the first factor.
\end{Ex}

\begin{Ex} \label{poislie}  Let us recall that a
\emph{Poisson algebra} is a commutative $K$-algebra $P$ equipped
with a Lie $K$-algebra structure such that the following identity
holds
\[
[a,bc]=b[a,c]+ [a,b]c,   \qquad a, b, c \in P.
\]
There are (at least) three Lie--Rinehart algebra related to $P$.
The first one is $P$ itself considered as a $P$-module in an
obvious way, where the action of $P$ (as a Lie algebra) on $P$ (as
a commutative algebra) is given by the homomorphism $\ad \colon P\to
\Der(P)$ given by $\ad(a)=[a,-]\in \Der(P)$. The second Lie--Rinehart algebra is the module of K\"ahler
differentials $\Omega ^1_{P}$. It is easily shown (see \cite{Hue})
that there is a unique Lie--Rinehart algebra structure on $\Omega
^1_{P}$ such that $[da,db]=d[a,b]$ and such that the Lie algebra
homomorphism $\Omega^1_P\to \Der(P)$ is given by
$adb\mapsto a[b,-]$. To describe the third one, we need some
preparations. We put
\[H^0_{\Poiss}(P,P):= \{a\in P \mid [a,-]=0\}.\]
Then $H^0_{\Poiss}(P,P)$ contains the unit of $P$ and is closed
with respect to products, thus it is a subalgebra of $P$. A
\emph{Poisson derivation} of $P$ is a linear map $D \colon P\to P$ which
is a simultaneous derivation with respect to commutative and Lie
algebra structures. We let ${\sf Der}_{\Poiss}(P)$ be the
collection of all Poisson derivations of $P$. It is closed with
respect to Lie bracket. Moreover if $a\in H^0_{\Poiss}(P,P)$ and
$D\in {\sf Der}_{\Poiss}(P)$ then $aD \in {\sf Der}_{\Poiss}(P)$. It
follows that ${\sf Der}_{\Poiss}(P)$ is a Lie--Rinehart
$H^0_{\Poiss}(P,P)$- algebra.
 There is the following variant of the first
construction in the graded case. Let $P_*=\bigoplus _{n\geq 0}P_n$
be a commutative graded $K$-algebra in the sense of commutative
algebra (i.e., no signs are involved) and assume $P_*$ is equipped
with a Poisson algebra structure such that the bracket has degree
$(-1)$. Thus $[-,-] \colon P_n\t P_m\to P_{n+m-1}$. Then $P_1$ is a
Lie--Rinehart $P_0$-algebra, where the Lie algebra homomorphism
$P_1\to \Der(P_0)$ is given by $a_1 \mapsto [a_1,-]$,
$[a_1,-]( a_0)=[a_1,a_0]$, where $a_i\in P_i$, $i=0,1$.
\end{Ex}

\subsection{Actions and Semidirect Product of Lie--Rinehart algebras}

\begin{De}
Let $L\in \lrak$ and let $R$ be a Lie $\A$-algebra. We will say that $L$ \emph{acts on} $R$ if it is given a $K$-linear
map
\[
L \otimes R \rightarrow R, \ (x,r)\mapsto  x \circ r, \ x\in L,r \in R
\]
such that the following identities hold
\begin{enumerate}
\item[1)] $[x,y] \circ r = x \circ (y \circ r) -y \circ (x \circ r)$,

\item[2)] $x \circ [r,r'] = [x \circ r,r'] - [x \circ r',r]$,

\item[3)] $ax \circ r = a(x \circ r)$,

\item[4)] $x \circ (ar) = a(x \circ r)+x(a)r$,
\end{enumerate}
where $a \in \A$, $x, y \in L$ and $r, r' \in R$.
\end{De}
Let us observe that 1) and 2) mean that $L$ acts on $R$
in the category of Lie $K$-algebras.

Let us consider a Lie--Rinehart algebra $L$ and a Lie $\A$-algebra
$R$ on which $L$ acts. Since $L$ acts on $R$
in the category of
Lie $K$-algebras as well, we can form the \emph{semi--direct product} $L \rtimes R$
in the category of Lie $K$-algebras, which is
$L\oplus R$ as a $K$-module, equipped with the
following bracket
\[
\lbrack (x,r),(y,r')\rbrack :=([x,y],[r,r']+ x \circ r'-y \circ r),
\]
where $x, y \in L$ and $r, r' \in R$.
We claim that $L\rtimes R$ has also a natural
Lie--Rinehart algebra structure. Firstly, $L\rtimes R$
as an $\A$-module is the direct sum of $\A$-modules
$L$ and $R$. Hence $a(x,r)=(ax,ar)$. Secondly the map
\[\widetilde{\alpha} \colon L \rtimes R\rightarrow {\dka}\]
is given by $\widetilde{\alpha }(x,r):=\alpha (x)$. In this way we
really get a Lie--Rinehart algebra. Indeed, it is clear that
$\widetilde{\alpha }$ is simultaneously an $\A$-modules and Lie
algebras homomorphism and it is obtained
\begin{align*}
[(x,r),a(y,r')] &=
[(x,r),(ay,ar')]
=([x,ay], [r,ar']+
x \circ (ar')- ay \circ r)\\
{}&=\big(a[x,y] +
x(a)y,a[r,r']+a(x \circ r')+x(a)r'-a(y \circ r)\big)\\
{}&=a([x,y],[r,r']+x \circ r'-y \circ r)+
(x(a)y,x(a)r')\\
{}&=a[(x,r),(y,r')]+x(a)(y,r').
\end{align*}
Thus $L \rtimes R$ is indeed a Lie--Rinehart algebra.

\begin{De}[\cite{Pal}] \emph{A left Lie--Rinehart $(\A,L)$-module} over a Lie--Rinehart A-algebra $L$ is a
$K$-module $\MM$ together with two operations
\[{L\t \MM \to \MM}, \qquad (x,m)\mapsto xm,\] and
\[\A\t \MM\to \MM, \qquad (a,m)\mapsto am ,\] such that the
first one makes $\MM$ into a module over the Lie $K$-algebra $L$ in the sense
of the Lie algebra theory, while the second map makes $\MM$ into an A-module and
additionally the following compatibility conditions hold
\begin{align*}
(ax)(m)&=a(xm), \\
x(am)&=a(xm)+x(a)m,  \qquad a\in \A, m\in \MM \ \text{and} \ x\in L.
\end{align*}
\end{De}

That is, $M$ is an abelian Lie A-algebra and $L$ acts on $M$.

Notice that a left Lie--Rinehart $(\A,L)$-module is equivalent to give a morphism of Lie--Rinehart A-algebras  $L \to \mathcal{A}_{\MM}$ (see Example~\ref{E:atiyah}).

It follows that $\A$ is a left Lie--Rinehart $(\A,L)$-module for any Lie--Rinehart
algebra $L$ given by the anchor.
\begin{De}[\cite{Hue1}] \emph{A right Lie--Rinehart $(\A,L)$-module} over a Lie--Rinehart A-algebra $L$ is a
$K$-module $\MM$ together with two operations
\[{\MM \t L \to \MM}, \qquad (m,x)\mapsto mx,\] and
\[\A\t \MM\to \MM, \qquad (a,m)\mapsto am ,\] such that the
first one makes $\MM$ into a module over the Lie $K$-algebra $L$ in the sense
of the Lie algebra theory, while the second map makes $\MM$ into an A-module and
additionally the following compatibility conditions hold
\[(am) x= m(ax)=a(mx)-x(a)m,  \qquad a\in \A, m\in \MM \ \text{and} \ x\in L.\]
\end{De}
\begin{Rem}\label{rem}
The differences between the definitions of left and right $(\A, L)$-module are significantly large. While in Lie algebras left and right $L$-modules are equivalent, in Lie--Rinehart that is not true. Concretely,  $\A$ has a canonical left $(\A, L)$-module structure but it does not hold a canonical right $(\A, L)$-module structure. See \cite{Hue2} for a characterization of right $(\A, L)$-module structures and see \cite{KrRo} for a concrete example.
\end{Rem}

\subsection{Crossed Modules of Lie--Rinehart algebras}

A \emph{crossed module $\partial \colon R \to L$ of Lie--Rinehart $\A$-algebras} (see \cite{CLP1}) consists of a Lie--Rinehart algebra $L$ and a Lie
 $\A$-algebra $R$ together with the action of $L$ on $R$ and the Lie $K$-algebra homomorphism $\partial$ such that the following identities hold:
\begin{enumerate}
\item $\partial(x \circ r) = [x, \partial(r)]$,
\item $\partial(r')\circ r = [r', r]$,
\item $\partial(ar) = a\partial(r)$,
\item $\partial(r)(a) = 0$,
\end{enumerate}
for all $a \in \A, r \in R$ and $x \in L$.

We can see some examples of crossed modules of Lie--Rinehart algebras.
\begin{enumerate}
\item For any Lie--Rinehart homomorphism $f \colon L \to R$, the diagram $\Ker f \to L$ is a crossed module of Lie--Rinehart algebras.

\item If $M$ is an ideal of $L$, the inclusion $M \hookrightarrow L$ is a crossed module where the action of $L$ on $M$ is given by the Lie bracket.

\item If $R$ is a left Lie--Rinehart $(\A,L)$-module, the morphism $0 \colon R \to L$ is a crossed module.

\item If $\partial \colon R \to L$ is a central epimorphism \big(i.e. $\Ker \partial \subset \Zenter(R)$\big) from a Lie $\A$-algebra $R$
 to a Lie--Rinehart algebra $L$, $\partial$ is a crossed module where the action from $L$ to $R$ is given by $x \circ r = [r', r]$, such that $\partial(r') = x$.
\end{enumerate}

\subsection{Universal enveloping algebras and related constructions}
There is a $K$-algebra $\uea L$ that has the property that the category of left
$\uea L$-modules is equivalent to the category of
left $(\A,L)$-modules. Actually this algebra was constructed in \cite{Rin}. We define the algebra
$\uea L$ in terms of generators and relations. We have
generators $i(x)$ for each $x \in L$ and $j(a)$ for each $a\in \A$. These generators must satisfy the following relations
\begin{align*}
j(1)&=1, \qquad \qquad  j(ab)=j(a)j(b),\\
i(ax)& =j(a)i(x),\\
i([x,y])&= i(x)i(y)-i(y)i(x),\\
i(x)j(a)&=j(a)i(x)+j\big(x(a)\big).
\end{align*}
The first relations show that $j \colon \A \to \uea L$ is an
algebra homomorphism.

Notice that in case of a trivial anchor one obtains the universal enveloping algebra of $L$ as a Lie A-algebra.

We let $V_n$ be the $\A$-submodule spanned on
all products $i(x_1)\cdots i(x_k)$, where $k\leq n$. Then
\[0\subset \A = V_0 \subset V_1 \subset \cdots \subset V_n \subset \cdots \subset \uea L  \]
defines an algebra filtration on $\uea L$. It is
clear that   $\uea L = \cup_{n\geq 0}V_n$. It follows
from the third relation  that the associated graded object ${\sf
gr}_*(V)$ is a commutative $\A$-algebra. In other words,
$\uea L$ is an almost commutative $\A$-algebra in the
following sense.

An \emph{almost commutative $\A$-algebra} is an associative $K$-algebra
$C$ together with a filtration
\[0\subset \A=C_0\subset C_1\subset \cdots \subset C_n\subset \cdots  \subset  C
= \bigcup_{n\geq 0}C_n\] such that $C_nC_m\subset C_{n+m}$ and
such that the associated graded object ${\sf gr}_*(C)=\bigoplus
_{n\geq 0}C_n/C_{n-1}$ is a commutative $\A$-algebra.
\begin{Rem}
It is
well known that if $C$ is an almost commutative $\A$-algebra, then
there is a well-defined bracket
\[[-,-] \colon {\sf gr}_n(C)\t {\sf gr}_m(C) \longrightarrow {\sf gr}_{n+m-1}(C)\]
which is given as follows. Let $a\in {\sf gr}_n(C)$ and $b\in{\sf
gr}_m(C) $ and $\hat{a}\in C_n$ and $\hat{b}\in C_m$ representing
$a$ and $b$ respectively. Since ${\sf gr}_*(C)$ is a commutative
algebra it follows that $\hat{a}\hat{b}-\hat{b}\hat{a}\in
C_{n+m-1}$ and the corresponding class in ${\sf gr}_{n+m-1}(C)$ is
$[a,b]$. It is also well known that in this way we obtain a
Poisson algebra structure on ${\sf gr}_*(C)$. Since the bracket is
of degree (-1) it follows from Example \ref{poislie} that
$L={\sf gr}_ 1(C)$ is a Lie--Rinehart $\A={\sf
gr}_0(C)$-algebra. Moreover the short exact sequence
\[\A \to C_1\to L  \]
is an abelian extension of Lie--Rinehart algebras (see below Definition \ref{D:ab_ext}).
\end{Rem}

\begin{Pro}\label{P:LR_functor}The correspondence assigning $C_1$
to the almost commutative $\A$-algebra $C$, defines a functor $LR \colon
{\normalfont \acoma} \to {\normalfont \lrak}$.
\end{Pro}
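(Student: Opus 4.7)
The plan is to endow $C_1$ with an explicit Lie--Rinehart structure, verify the axioms directly from the almost-commutativity of $C$, and then observe that filtered $\A$-algebra morphisms restrict to Lie--Rinehart morphisms.

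First I would define the structure. On $C_1$, the $\A$-module structure is the one inherited by left multiplication from $C$ (since $\A = C_0 \subset C_1$ and $C_0 \cdot C_1 \subset C_1$). The bracket is the commutator $[x,y] := xy - yx$; the almost-commutative condition $C_1 C_1 \subset C_2$ together with the fact that $\mathrm{gr}_*(C)$ is commutative forces $xy - yx \in C_1$, so the bracket is internal to $C_1$. The anchor $\alpha \colon C_1 \to \dka$ is defined by $\alpha(x)(a) := xa - ax$; here the key point is that $xa - ax \in C_0 = \A$, which is exactly the observation in the remark that the graded bracket has degree $-1$. That $\alpha(x)$ is a $K$-derivation of $\A$ follows from the Leibniz identity for the commutator in any associative algebra.

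Next I would check the axioms. The $K$-Lie algebra structure on $C_1$ comes for free from the associative commutator (antisymmetry and Jacobi). For the compatibility identity $[x, ay] = a[x,y] + x(a)y$, I would compute directly:
\[
[x, ay] = x(ay) - (ay)x = (ax + [x,a])y - ayx = a(xy-yx) + [x,a]y = a[x,y] + x(a)y,
\]
using commutativity of $\A$ in the rearrangement $(ay)x = ayx$. The anchor is an $\A$-module map because $\A$ is commutative: $\alpha(ax)(b) = axb - bax = axb - abx = a\alpha(x)(b)$. It is a Lie algebra morphism by the Jacobi identity for the commutator. Together these verify that $C_1$ is a Lie--Rinehart $\A$-algebra.

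Finally, I would address functoriality. A morphism in $\acoma$ is an $\A$-algebra homomorphism $f \colon C \to C'$ respecting the filtrations, i.e., $f(C_n) \subset C'_n$. Its restriction $f|_{C_1} \colon C_1 \to C'_1$ is then an $\A$-module map and intertwines commutators, hence intertwines both the bracket and the anchor; it therefore defines a morphism in $\lrak$. Identities and compositions are preserved because they are preserved at the level of associative $\A$-algebras. The main obstacle is simply the bookkeeping required to confirm that each commutator lands in the correct piece of the filtration, but this is dictated entirely by the degree $-1$ property of the Poisson bracket on $\mathrm{gr}_*(C)$ already recorded in the preceding remark.
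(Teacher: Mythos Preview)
Your proposal is correct and follows essentially the same approach as the paper: the Lie--Rinehart structure on $C_1$ is the commutator bracket with anchor $x \mapsto [x,-]|_\A$, and functoriality is obtained by restricting filtered $\A$-algebra maps to $C_1$. If anything, your argument is more complete, since the paper's proof only spells out the functoriality step and leaves the verification that $C_1$ is a Lie--Rinehart algebra to the preceding Remark.
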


\begin{proof}
Let $f \colon C \to D \in \acoma$. Since $f$ preserves the
filtration, $f(C_1) \subseteq D_1$. Furthermore, $f(a x) =
f(a)f(x) = a f(x)$, for any $a \in C_0 = D_0$ and $x \in C_1$, and
$f([x,y]) = f(xy - yx)=f(x)f(y) - f(y)f(x) = [f(x),f(y)]$, for
$x,y \in C_1$. Hence the restriction of $f$ to $C_1$, which we
shall call $LR(f)$, is a morphism of $K$-Lie algebras and of
$\A$-modules such that the following diagram commutes in $\liek$,
\begin{equation*}
\xymatrix{
  C_1 \ar[rr]^{LR(f)} \ar[dr]_{[\circ, -]}
                &  & D_1  \ar[dl]^{[\circ, -]}    \\
                & \dka                }
\end{equation*}
Thus, $LR(f) \in \lrak$.

On the other hand, it is clear that $LR(1_{C}) = 1_{C_1}$ and
the following diagram commutes in $K$-mod,
\begin{equation*}
 \xymatrix{
   C  \ar[r]^{f} & D \ar[r]^{g} & E  \\
   C_1 \ar@{^{(}->}+<0ex,2ex>;[u]^-{i_C} \ar[r]^{LR(f)} & D_1
   \ar@{^{(}->}+<0ex,2ex>;[u]_-{i_D} \ar[r]^{LR(g)} & E_1 \ar@{^{(}->}+<0ex,2ex>;[u]_-{i_E}  }
\end{equation*}
Hence $LR$ is functorial.
\end{proof}

\begin{Pro}
The functor $LR$ is right adjoint to the universal enveloping
functor ${\normalfont \uea} \colon {\normalfont \lrak} \to {\normalfont \acoma}$.
\end{Pro}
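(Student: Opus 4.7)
The plan is to exhibit, for each Lie--Rinehart algebra $L$ and each almost commutative $\A$-algebra $C$, a natural bijection
\[
\Hom_{\acoma}\bigl(\uea L,\,C\bigr)\;\cong\;\Hom_{\lrak}\bigl(L,\,LR(C)\bigr),
\]
so the unit of the adjunction will be the map $\eta_L\colon L\to LR(\uea L)=V_1$ defined by $x\mapsto i(x)$, which is a Lie--Rinehart homomorphism by the very relations imposed on $\uea L$ (the relation $i(ax)=j(a)i(x)$ gives $\A$-linearity, $i([x,y])=i(x)i(y)-i(y)i(x)$ gives compatibility with the brackets after passing to $V_1/V_0$, and the last relation $i(x)j(a)-j(a)i(x)=j(x(a))$ encodes the anchor).

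The key step is the construction of the counit, or equivalently, given a morphism $\varphi\colon L\to LR(C)=C_1$ in $\lrak$, the construction of a (necessarily unique) extension $\bar\varphi\colon \uea L\to C$ in $\acoma$. I would define $\bar\varphi$ on generators by $\bar\varphi(j(a)):=a\in C_0$ and $\bar\varphi(i(x)):=\varphi(x)\in C_1$, and then verify that the four defining relations of $\uea L$ are respected:
\begin{enumerate}
\item $j(1)\mapsto 1$ and $j(ab)\mapsto ab$ hold since $\A=C_0$ is a subalgebra of $C$;
\item $i(ax)\mapsto \varphi(ax)=a\varphi(x)=\bar\varphi(j(a)i(x))$ follows from $\A$-linearity of $\varphi$;
\item $i([x,y])\mapsto \varphi([x,y])=[\varphi(x),\varphi(y)]=\varphi(x)\varphi(y)-\varphi(y)\varphi(x)$ uses that $\varphi$ is a Lie homomorphism and that on $C_1$ the Lie--Rinehart bracket is induced by the associative commutator modulo $C_0$ — but in fact the associative commutator of two elements of $C_1$ already lies in $C_1$ and represents the bracket, because the degree $1$ piece of $\mathrm{gr}_*(C)$ is free of the defect at degree $0$;
\item the remaining relation $i(x)j(a)=j(a)i(x)+j(x(a))$ is the anchor relation, and it translates to $\varphi(x)\,a-a\,\varphi(x)=x(a)$ inside $C$; this is exactly the anchor compatibility, because the Lie--Rinehart structure on $C_1$ defined in the preceding remark has anchor $x\mapsto [x,-]\colon C_0\to C_0$, and $\varphi$ commutes with the anchors by definition of $\lrak$.
\end{enumerate}
Only relation (4) requires the anchor hypothesis, and this will be the conceptual heart of the verification; relation (3) requires some care because one must check that the commutator, which a priori lies in $C_1$, really equals $\varphi([x,y])$ and not just agrees with it modulo $C_0$.

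Once $\bar\varphi$ is well-defined on generators and hence on all of $\uea L$, it is automatically an $\A$-algebra homomorphism, and it preserves the filtration since $\bar\varphi(V_n)\subseteq C_n$ (the image of a product of $n$ generators $i(x_k)$ lies in $C_1^{\,n}\subseteq C_n$). Finally, I would check the two triangular identities of the adjunction: $LR(\bar\varphi)\circ\eta_L=\varphi$ is immediate from the definitions, and uniqueness of $\bar\varphi$ follows because $\uea L$ is generated by $i(L)\cup j(\A)$, so any two extensions of $\varphi$ must coincide. Naturality in both $L$ and $C$ is then a diagram chase on generators. The main obstacle, as noted, is verifying relations (3) and (4): one must use the precise definition of the Lie--Rinehart structure on $C_1$ coming from the almost commutative filtration, and check that it is preserved strictly (not just up to lower filtration) by $\varphi$.
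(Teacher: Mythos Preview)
Your proposal is correct and follows essentially the same route as the paper: both exploit that $\uea L$ is presented by generators $i(x)$, $j(a)$ and the four relations, so a filtered algebra map $\uea L\to C$ is the same datum as a Lie--Rinehart map $L\to C_1$, and the inverse is built by sending generators to $\varphi(x)\in C_1$ and $a\in C_0$. Your caution about relation~(3) is slightly misplaced: in the paper $LR(C)=C_1$ carries the \emph{literal} associative commutator as its bracket (which lands in $C_1$ because $\mathrm{gr}_*(C)$ is commutative), not the bracket modulo $C_0$, so once $\varphi$ is assumed to be a Lie homomorphism the identity $\varphi([x,y])=\varphi(x)\varphi(y)-\varphi(y)\varphi(x)$ holds on the nose and there is nothing further to check.
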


\begin{proof}
Let $\Phi \colon \acoma(\uea L, C) \to \lrak\big(L, LR(C)\big)$
be the map given as follows. Since $\uea L$ is generated as a
$K$-algebra by $L$ and $\A$, a morphism $f \colon \uea L \to C$  is
completely determined by its restriction to $L$ and A. Since
$f(a) = a$ for every $a \in \A$, and $f(L) \subseteq f\big((\uea L)_1\big)
\subseteq C_1$, it follows that the restriction of $f$ to $L$,
$\Phi f \colon L \to C_1 = LR(C)$ is a monomorphism of Lie--Rinehart
algebras.

Let $g \colon L \to C_1 \in \lrak$. We build up $\widetilde{g} \colon \uea L \to C$ by
$\widetilde{g}(a x_1 \cdots x_m):= a g(x_1) \cdots g(x_m) \in C$. It is
straightforward to see that $\widetilde{g} \in \acoma$ and $\Phi \widetilde{g}= g$.
Hence $\Phi$ is bijective, and $\uea$ and $\lrf$ form an adjoint
pair.
\end{proof}

\subsection{Free Lie--Rinehart Algebras}
We have the functor
\[
U \colon \lrak \to {\mathrm { \Vect / \dka}}
\]
which assigns $\alpha \colon L\to \dka$ to a
Lie--Rinehart algebra $L$. Here ${\mathrm { \Vect / \dka}}$  is the category
of $K$-linear maps $\psi \colon V \to \dka$, where $V$ is a $K$-module.
 A morphism $\psi\to \psi_1 $ in ${\mathrm { \Vect / \dka}}$  is a
$K$-linear map $f \colon V\to V_1$ such that $\psi=\psi_1\circ f$. Now we
construct the functor
\[
F \colon {\mathrm {\Vect/\dka}}\to\lrak
\]
as follows. Let $\psi \colon V\to \dka$ be a $K$-linear map. We let ${\mathbf
L}(V)$ be the free Lie $K$-algebra generated by $V$. Then we have
the unique Lie $K$-algebra homomorphism ${\mathbf L}(V)\to  \dka$
which extends the map $\psi$, which is still denoted by $\psi$.
Now we can apply the construction from Example \ref{trans} to get
a Lie--Rinehart algebra structure on $ \A \otimes {\mathbf L}(V)$. We let
$F(\psi)$ be this particular Lie--Rinehart algebra and we call it
\emph{the free Lie--Rinehart algebra generated by $\psi$}. In this
way we obtain the functor $F$, which is the left adjoint to $U$.

Kapranov \cite{Kap} defines a different concept of free Lie--Rinehart algebra as the adjoint of the forgetful functor $U' \colon \lrak \to {\mathrm { _\A{\bf mod} / \dka}}$. The relation between both constructions is given in \cite[(2.2.8) Proposition]{Kap}.

\subsection{Rinehart homology and cohomology of Lie--Rinehart algebras}
Let $\MM$ be a left Lie--Rinehart $(\A,L)$-module. Let us recall the
definition of the Rinehart  cohomology  $H^{*}_{\Rin}(L,\MM)$ of a
Lie--Rinehart algebra $L$ with coefficients in a Lie--Rinehart
module $\MM$ (see \cite{Pal,Rin} and \cite{CLP2,Hue}). We put
\[
 C^n_\A(L,\MM) := \Hom_{\A} (\Lambda^n_{\A}L,\MM), \qquad \, n\geq 0,
\]
where $\Lambda^*_\A(V)$ denotes the exterior algebra over A
generated by an $\A$-module $V$. The coboundary map
\[
\delta  \colon C^{n-1}_\A(L,\MM)
\longrightarrow C^n_\A(L,\MM),
\]
is given by
\begin{multline*}
(\delta f)(x_1, \dots,x_n) = \sum_{i=1}^n(-1)^{(i-1)} \, x_i\big(f(x_1,
\dots , \hat{x_i}, \dots,x_n)\big) \\
+ \sum_{j<k} (-1)^{j+k} \, f([x_j,x_k],x_1, \dots,
\hat{x_j}, \dots,\hat{x_k}, \dots,x_n),
\end{multline*}
 where $x_1, \dots,x_n \in
L, m \in \MM, f \in C^{n-1}_\A(L,\MM)$.

We note that the differential  $\delta$ is not A-linear unless $L$ acts trivially on A.

For any left Lie--Rinehart $(\A,L)$-module  $\MM$, the \emph{Lie--Rinehart cohomology} is defined by
\[
H^{n}_{\Rin}(L,\MM) = H^n\big(C^n_\A(L,\MM) \big), \qquad n\geq 0.
\]

Let $\MM$ be a right Lie--Rinehart $(\A,L)$-module. Let us recall the
definition of the Rinehart homology  $H_{*}^{\Rin}(L,\MM)$ of a
Lie--Rinehart algebra $L$ with coefficients in a Lie--Rinehart
module $\MM$. We put
\[
C_n^\A(L,\MM)  := \MM \otimes_\A \Lambda^n_{\A}L,  \qquad \qquad n\geq 0.\\
\]
 The boundary  map
\[
\partial  \colon C_n^\A(L,\MM)
\longrightarrow C_{n-1}^\A(L,\MM), \\
\]
 is given by
\begin{multline*}
\partial\big( m \otimes_\A (x_1, \dots,x_n) \big)= \sum_{i=1}^n(-1)^{(i-1)} \, mx_i \otimes_\A (x_1,
\dots , \hat{x_i}, \dots,x_n) \\
+  \sum_{j<k} (-1)^{j+k} \, m \otimes_\A ([x_j,x_k],x_1, \dots,
\hat{x_j}, \dots,\hat{x_k}, \dots,x_n),
\end{multline*}
 where $x_1, \dots,x_n \in
L, m \in \MM$.

We note that the differential $\partial$  is not A-linear unless $L$ acts trivially on A.

For any right Lie--Rinehart $(\A,L)$-module  $\MM$,  the \emph{Lie--Rinehart homology} is defined by
\[
H_{n}^{\Rin}(L,\MM) = H_n\big (C_n^\A(L,\MM)\big), \qquad n\geq 0 .
\]

Let $\mathfrak g$ be a Lie algebra over $K$ and let $\MM$ be a $\mathfrak g$-module. Then
we have the Chevalley--Eilenberg chain and cochain complexes  $C_*^{\Lie}({\mathfrak g},\MM)$ and $C^*_{\Lie}({\mathfrak g},\MM)$,
which compute the Lie algebra (co)homology (see \cite{CaEi}):
\begin{align*}
C_n^{\Lie}({\mathfrak g},\MM)&=\Lambda ^n({\mathfrak g})\otimes \MM,\\
C^n_{\Lie}({\mathfrak g},\MM)&=\Hom(\Lambda ^n({\mathfrak g}),\MM).
\end{align*}
Here $\Lambda ^*$  denotes the exterior algebra defined over $K$.

 One observes that if $\A=K$, then $H_{*}^{\Rin}(L,\MM)$ and  $H^{*}_{\Rin}(L,\MM)$ generalize the classical
 definition of Lie algebra (co)homology.

For a general A by forgetting the $\A$-module structure one obtains the canonical
homomorphisms
\[H_{*}^{\Lie}(L,\MM) \to H_{*}^{\Rin}(L,\MM), \qquad  H^{*}_{\Rin}(L,\MM)\to H^{*}_{\Lie}(L,\MM),\]
where $H_{*}^{\Lie}(L,\MM)$ and $H^{*}_{\Lie}(L,\MM)$ denote  the homology and  cohomology of $L$
considered as a Lie $K$-algebra. On the other hand if
 A is a smooth commutative algebra, then
$H^{*}_{\Rin}(\Der(\A), \A)$ is isomorphic to the de Rham
cohomology of A (see \cite{Rin} and \cite{Hue}).

\begin{Le}\label{L:trans} Let ${\mathfrak g}$  be a Lie $K$-algebra acting on a
commutative algebra $\A$ by derivations and let $L$ be the
transformation Lie--Rinehart algebra of $({\mathfrak g},\A)$ (see Example \ref{trans}).
 Then for any   Lie--Rinehart $(\A,L)$-module
$\MM$ we have the canonical isomorphisms of
complexes $C_*^\A(L,\MM)\cong C_*^{\Lie}({\mathfrak g},\MM)$, $C^*_\A(L,\MM)\cong C^n_{\Lie}({\mathfrak g},\MM)$ and
in particular the isomorphisms
\begin{align*}
H_*^{\Rin}(L, \MM) & \cong  H_*^{\Lie}({\mathfrak g},\MM),\\
H^*_{\Rin}(L, \MM) & \cong  H^*_{\Lie}({\mathfrak g},\MM).
\end{align*}
\end{Le}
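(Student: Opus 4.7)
The plan is to exploit the fact that, as an $\A$-module, $L = \A \otimes \mathfrak{g}$ is free on the $K$-module underlying $\mathfrak{g}$, with basis inclusion $g \mapsto 1 \otimes g$. This gives a canonical $\A$-linear isomorphism $\Lambda^n_\A L \cong \A \otimes \Lambda^n \mathfrak{g}$ for each $n \geq 0$, where the $\A$-action on the right is concentrated in the first tensor factor. Applying $\Hom_\A(-, \MM)$ together with the tensor--Hom adjunction $\Hom_\A(\A \otimes V, \MM) \cong \Hom(V, \MM)$ immediately yields the $K$-module isomorphism
\[
C^n_\A(L, \MM) \cong \Hom(\Lambda^n \mathfrak{g}, \MM) = C^n_{\Lie}(\mathfrak{g}, \MM),
\]
while tensoring over $\A$ with $\MM$ gives
\[
C_n^\A(L, \MM) \cong \MM \otimes \Lambda^n \mathfrak{g} = C_n^{\Lie}(\mathfrak{g}, \MM).
\]

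With these module-level identifications in hand, the remaining task is to check that the Rinehart differential agrees with the Chevalley--Eilenberg differential under the isomorphisms just built. Since neither differential is $\A$-linear, I would verify the compatibility by evaluating both sides at generators of the form $(1 \otimes g_1, \ldots, 1 \otimes g_n)$ with $g_i \in \mathfrak{g}$, which span $\Lambda^n_\A L$ as an $\A$-module, and extend the result by $K$-linearity. The decisive computation is
\[
[1 \otimes g,\, 1 \otimes g'] = 1 \otimes [g, g'],
\]
because the two anchor-correction terms $\gamma(g)(1) \otimes g'$ and $-\gamma(g')(1) \otimes g$ of the transformation Lie--Rinehart bracket vanish, derivations annihilating the unit of $\A$. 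Combined with the observation that the $(\A, L)$-action on $\MM$ restricts along $\mathfrak{g} \hookrightarrow L$, $g \mapsto 1 \otimes g$, to precisely the $\mathfrak{g}$-module structure used by Chevalley--Eilenberg, this makes each summand of the Rinehart formula match the corresponding summand of the Chevalley--Eilenberg formula.

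The main obstacle, really the only subtlety, is the non-$\A$-linearity of the differentials, which rules out a purely functorial $\A$-linear argument and forces the explicit evaluation on the $K$-generators $1 \otimes \mathfrak{g}$. The chain case runs in complete parallel with the cochain case, the only formal change being that the action of $L$ on $\MM$ appears on the right rather than on the left. Once the chain and cochain complexes have been identified, passing to (co)homology produces the stated isomorphisms $H_*^{\Rin}(L, \MM) \cong H_*^{\Lie}(\mathfrak{g}, \MM)$ and $H^*_{\Rin}(L, \MM) \cong H^*_{\Lie}(\mathfrak{g}, \MM)$.
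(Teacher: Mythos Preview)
Your argument is correct and follows the same approach as the paper: both identify $\Lambda^n_\A L \cong \A \otimes \Lambda^n \mathfrak{g}$ and deduce the chain and cochain isomorphisms from this. The paper's proof is extremely terse (it writes the module isomorphisms and then ``lemma follows''), whereas you supply the verification that the differentials match on the generators $1 \otimes g_i$, which is exactly the detail the paper leaves implicit.
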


\begin{proof}
Since $L = \A \otimes {\mathfrak g} $ we have
$\Lambda ^n_\A L \otimes_\A \MM \cong \Lambda ^n_\A{\mathfrak g} \otimes_\A \MM $ and
$\Hom_\A(\Lambda ^n_\A L,\MM)\cong \Hom(\Lambda
^n{\mathfrak g},\MM) $ and lemma follows.
\end{proof}

\begin{Pro} Let $L$ be a free Lie--Rinehart
algebra generated by $\psi \colon V\to {\texttt{\rm Der}_K(\A)}$ and let $\MM$ be any
Lie--Rinehart  $(\A,L)$-module. Then
\begin{align*}
H_n^{\Rin}(L,\MM)& =0, \qquad n>1, \\
H^n_{\Rin}(L, \MM)&=0,  \qquad n>1.
\end{align*}
\end{Pro}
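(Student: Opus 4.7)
The plan is to reduce the claim to the classical vanishing of Chevalley--Eilenberg (co)homology for a free Lie algebra via Lemma~\ref{L:trans}.

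The first step is to observe that the free Lie--Rinehart algebra $L$ is, by its very construction in the preceding subsection, the transformation Lie--Rinehart algebra of the pair $({\mathbf L}(V),\A)$, where ${\mathbf L}(V)$ is the free Lie $K$-algebra on $V$ acting on $\A$ through the unique Lie-algebra extension of $\psi$. Consequently Lemma~\ref{L:trans} applies and yields canonical isomorphisms of complexes
\[
C_\ast^{\A}(L,\MM)\cong C_\ast^{\Lie}({\mathbf L}(V),\MM), \qquad C^\ast_{\A}(L,\MM)\cong C^\ast_{\Lie}({\mathbf L}(V),\MM),
\]
and hence of the corresponding (co)homology, so $H_n^{\Rin}(L,\MM)\cong H_n^{\Lie}({\mathbf L}(V),\MM)$ and $H^n_{\Rin}(L,\MM)\cong H^n_{\Lie}({\mathbf L}(V),\MM)$ in every degree.

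The task is thus reduced to the familiar vanishing of Chevalley--Eilenberg (co)homology of a free Lie $K$-algebra in degrees $n>1$. This is standard: the universal enveloping algebra of ${\mathbf L}(V)$ is the tensor algebra $T(V)$, whose augmentation ideal is free as a left $T(V)$-module generated by $V$, so one has a length-one projective resolution $0\to T(V)\otimes V\to T(V)\to K\to 0$ of the trivial module. The identifications $H_\ast^{\Lie}({\mathbf L}(V),\MM)\cong \operatorname{Tor}_\ast^{T(V)}(K,\MM)$ and $H^\ast_{\Lie}({\mathbf L}(V),\MM)\cong \operatorname{Ext}_{T(V)}^\ast(K,\MM)$ then give the required vanishing at once.

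The only mildly delicate point I would pause to verify is that the Lie--Rinehart $(\A,L)$-module structure on $\MM$, pulled back along the Lie $K$-algebra embedding ${\mathbf L}(V)\hookrightarrow L$, $x\mapsto 1\otimes x$, is exactly the underlying ${\mathbf L}(V)$-module structure employed on the right-hand side of Lemma~\ref{L:trans}; this is essentially implicit in the proof of that lemma but worth recording. No further obstacle is anticipated: the argument is a clean composition of Lemma~\ref{L:trans} with the homological-dimension-one property of the tensor algebra.
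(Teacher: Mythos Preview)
Your proof is correct and follows essentially the same approach as the paper: identify $L$ as the transformation Lie--Rinehart algebra of $({\mathbf L}(V),\A)$, apply Lemma~\ref{L:trans} to reduce to Chevalley--Eilenberg (co)homology of the free Lie algebra ${\mathbf L}(V)$, and then invoke the classical vanishing result. The paper merely cites the last step as ``well known'', whereas you have helpfully sketched the length-one resolution of $K$ over $T(V)$; otherwise the arguments coincide.
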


\begin{proof}
By our construction  $L$ is a transformation
Lie--Rinehart algebra of $({\mathbf L}(V),\A)$. Thus we can
apply Lemma \ref{L:trans} to get isomorphisms
$H_*^{\Rin}(L,\MM)\cong H_*^{\Lie}({\mathbf L}(V),\MM)$ and $H^*_{\Rin}(L,\MM)\cong H^*_{\Lie}({\mathbf L}(V),\MM)$ and then we can
use the well-known  vanishing result for free Lie algebras.
\end{proof}

\subsection{Low degree homology groups of Lie--Rinehart algebras}

By definition, $H_{0}^{\Rin}(L,\MM)=\frac{\MM}{\MM \circ L}$, is the module of \emph{coinvariants} of $\MM$, where $\MM \circ L$ means
 the $K$-submodule of $\MM$ generated by $mx$, $x \in L, m\in \MM$, and $H^{0}_{\Rin}(L,\MM)=\MM^L=\{m \in \MM \mid x m=0 \ \text{for all}
  \ x\in L\}$, is the \emph{invariant $K$-submodule} of $\MM$.

 It follows from the definition that one has the following exact sequence
\begin{equation}\label{h01}
0\to H^0_{\Rin}(L,\MM)\to \MM \xrightarrow{ \ d \ }   \Der_\A(L,\MM)\to
H^1_{\Rin}(L,\MM)\to 0,
\end{equation}
where $\Der_\A(L,\MM)$ consists of A-linear maps $d \colon L\to
\MM$ which are derivations from the Lie $K$-algebra $L$ to $\MM$. In other
words $d$ must satisfy the following conditions:
\begin{align*}
d(ax) & = ad(x), \\
d([x,y])&=x\big(d(y)\big)-y\big(d(x)\big), \qquad  \quad  a\in \A,\, x,y\in L.
\end{align*}
If $m \in \MM$, the map $d_m \colon L \to \MM, x \mapsto xm$, is a derivation.
The maps $d_m$ are called \emph{inner derivations }of $L$ into $\MM$, and they form an $K$-submodule $\IDer_\A(L,\MM)$ of $\Der_\A(L,\MM)$. By \eqref{h01},
$H^1_{\Rin}(L,\MM)\cong \Der_\A(L,\MM)/\IDer_\A(L,\MM)$.

If $\MM$ is a trivial $(\A,L)$-module, then $H^1_{\Rin}(L,\MM)\cong \Der_\A(L,\MM) \cong \Hom_\A(L^{\ab},\MM)$ and
$H_1^{\Rin}(L,\MM)\cong \frac{\MM \otimes_\A L}{\MM \otimes_\A \{L,L\}} \cong \MM \otimes_\A L^{\ab}$.

\subsection{Abelian  extensions of Lie--Rinehart algebras}

 \begin{De}\label{D:ab_ext}
Let $L$ be a Lie--Rinehart A-algebra
and let $\MM$  a  left Lie-Rinehart $(\A,L)$-module. An \emph{abelian extension} of $L$
by $\MM$ is  a short exact sequence
\[  \MM \xrightarrow { \ i \ } L' \xrightarrow { \ \partial \ } L, \]
where $L'$ is a Lie--Rinehart $\A$-algebra and $\partial$ is a
Lie--Rinehart algebra homomorphism. Moreover, $i$ is an $\A$-linear map and the
following identities hold
\begin{align*}
[i(m),i(n)]&=0,\\
[i(m),x']&=\big(\partial(x')\big)(m), \qquad  m,n\in \MM, \ x'\in L'.
\end{align*}
An abelian extension is
called $\A$-\emph{split} if $\partial$ has an $\A$-linear section.
\end{De}

\begin{Pro}[{\cite[Theorem 2.6]{Hue}}]\label{P:ab_ext}
If $L$ is $\A$-projective, then the cohomology $H^2_{\Rin}(L,\MM)$ classifies the abelian
extensions
\[\MM  \xrightarrow { \  \ } L ' \xrightarrow { \  \ }  L \]
of $L$ by $\MM$ in the category of Lie--Rinehart  algebras which
split in the category of $\A$-modules.
\end{Pro}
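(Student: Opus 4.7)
The plan is to exhibit an explicit bijection between equivalence classes of $\A$-split abelian extensions of $L$ by $\MM$ and the group $H^2_{\Rin}(L,\MM)$, following the standard template from Lie algebra cohomology but paying careful attention to how the anchor and $\A$-linearity interact.

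First I would construct a map from extensions to $2$-cocycles. Given an abelian extension $\MM \xrightarrow{i} L' \xrightarrow{\partial} L$ together with an $\A$-linear section $s \colon L \to L'$ of $\partial$, define $f \colon L \times L \to \MM$ by $i\bigl(f(x,y)\bigr) = [s(x),s(y)] - s([x,y])$. Because $\partial s = \mathrm{id}_L$, the image lies in $\ker\partial = i(\MM)$, so $f$ is well defined; it is visibly alternating. To see that $f$ is $\A$-bilinear, observe that in $L'$ one has $[s(x), a\cdot s(y)] = a[s(x),s(y)] + \bigl(\partial s(x)\bigr)(a)\cdot s(y) = a[s(x),s(y)] + x(a)\,s(y)$, while $s([x,ay]) = s(a[x,y]+x(a)y) = a\,s([x,y]) + x(a)\,s(y)$; the two $x(a)s(y)$ terms cancel, yielding $f(x,ay)=a\,f(x,y)$. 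The Jacobi identity in $L'$, combined with the bracket relation $[i(m), x'] = \partial(x')(m)$ that makes $\MM$ an $(\A,L)$-module via $\partial$, gives $\delta f = 0$. Two $\A$-linear sections $s, s'$ differ by $i\circ g$ for a unique $\A$-linear $g\colon L \to \MM$, and a direct computation shows the two associated cocycles differ by the coboundary $\delta g$, so the cohomology class depends only on the extension.

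Conversely, given $f \in \mathrm{Hom}_\A(\Lambda^2_\A L, \MM)$ with $\delta f = 0$, I would build $L'_f := \MM \oplus L$ as an $\A$-module in the obvious way, with anchor $\widetilde{\alpha}(m,x) := \alpha(x)$ and bracket
\[
[(m,x),(n,y)] := \bigl(x\cdot n - y\cdot m + f(x,y),\, [x,y]\bigr).
\]
The Lie bracket axioms (bilinearity, antisymmetry, Jacobi) follow from the $(\A,L)$-module axioms for $\MM$ and the cocycle condition on $f$. The Lie--Rinehart identity $[v, a w] = a[v,w] + \widetilde{\alpha}(v)(a)\,w$ reduces, for $v=(m,x)$ and $w=(n,y)$, to the combination of $[x,ay]=a[x,y]+x(a)y$ in $L$, $x(an) = a(xn) + x(a)n$ from the module axiom for $\MM$, and $\A$-bilinearity of $f$; the extra $x(a)$-contributions balance. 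The natural inclusion and projection give an $\A$-split abelian extension $\MM \to L'_f \to L$, whose cohomology class under the first construction returns $[f]$.

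The last step is to verify that the two constructions are mutually inverse on equivalence classes. One direction is immediate from the formula for $L'_f$. For the other, given $(L',s)$ with cocycle $f$, the $\A$-linear map $L'_f \to L'$, $(m,x)\mapsto i(m)+s(x)$, is an isomorphism of Lie--Rinehart algebras over $L$ (here one uses that $s$ is $\A$-linear, hence the direct-sum $\A$-module structure matches). The role of the hypothesis that $L$ is $\A$-projective is twofold: it guarantees that the complex $\mathrm{Hom}_\A(\Lambda^*_\A L, \MM)$ really computes the right derived functors classifying extensions (so that the cohomology group is the correct invariant), and in fact makes every $\A$-module surjection $L' \twoheadrightarrow L$ admit an $\A$-linear section, so the ``$\A$-split'' condition is automatic. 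The main technical point to keep track of throughout is the interplay between $\A$-linearity and the anchor: all the would-be obstructions to $\A$-bilinearity of $f$ and to the Lie--Rinehart identity in $L'_f$ come from anchor-twist terms $x(a)\,(-)$, and they cancel in just the right way — this is the only genuinely Lie--Rinehart (as opposed to Lie) content of the proof.
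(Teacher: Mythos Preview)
The paper does not supply a proof of this proposition; it is stated with a citation to \cite[Theorem~2.6]{Hue} and nothing more. Your argument is the standard cocycle/extension correspondence, carried out with appropriate care for the anchor-twisted terms, and it is correct. One small point worth tightening: the explicit bijection you construct between $H^2_{\Rin}(L,\MM)$ and equivalence classes of $\A$-split abelian extensions nowhere actually uses $\A$-projectivity of $L$ --- the $\A$-split hypothesis already hands you the section $s$, and everything else is direct computation. Your observation that projectivity renders the $\A$-split condition automatic is correct and is really the only place the hypothesis enters; the aside about the complex ``computing the right derived functors'' is extraneous to the argument you give and could be dropped.
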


For a left Lie-Rinehart $(\A,L)$-module $\MM$
one can define \emph{the semi-direct product} $L\rtimes \MM$ to
be $L \oplus \MM$ as an A-module with the bracket
$[(x,m),(y,n)]=\big([x,y],x n-y m\big)$, $x,y\in L, m,n  \in \MM$.

The extension $ \MM  \xrightarrow { \  \ } L\rtimes \MM \xrightarrow { \  \ }  L $
represents $0 \in H^2_{\Rin}(L,\MM)$.

\section{Universal central extensions of Lie--Rinehart algebras}
\label{S:uce}

\subsection{Central extensions}

An \emph{extension} of a Lie--Rinehart algebra $L$ is a short exact sequence
\begin{equation}
\label{secext}
 I \xrightarrow { \  i \ }   E \xrightarrow { \ p  \ }   L,
\end{equation}
where $I, E$ and $L$ are Lie--Rinehart algebras and $i, p$ are Lie--Rinehart homomorphisms.
Since $i \colon  I \to i(I) = \Ker p$ is an isomorphism we shall identify
$I$ and $i(I)$. In other words, an extension of $L$ is an
surjective Lie--Rinehart homomorphism $p \colon  E \to L$. If $p \colon  E \to L$ and $p' \colon  E' \to L$ are
two extensions of $L$, a \emph{homomorphism} from $p$ to $p'$ is a
commutative diagram in $\lrak$ of the form
\[
\xymatrix{
  E \ar[rr]^{f} \ar@{>>}[dr]_-{p}
                &  &    E' \ar@{>>}[dl]^-{p'}    \\
                & L                 }
\]

In particular,
\[\Ker f \subseteq f^{-1}(\Ker p') = \Ker p \ \textrm{and} \ E' = f(E) + \Ker p' .\]

An extension \eqref{secext} is called \emph{split} if there exists
a Lie--Rinehart morphism $s \colon  L \to E$, called \emph{splitting
homomorphism}, such that $p s = 1_L$. In this case, $E = I \oplus
s(L)$ and $s \colon  L \to s(L)$ is an isomorphism with inverse $f|s(L)$.
Moreover, $E \simeq I \rtimes L$, the semidirect product. In this
way, semidirect products and split exact sequences are in a one to
one correspondence. We point out that not every extension splits.
We shall say that an extension \emph{splits uniquely}  whenever
the splitting morphism is unique.

A \emph{central extension of $L$} is an extension $p$ such that
$\Ker p \subseteq \zena (E)$. In particular, if $p \colon
\xymatrix@C=0.6cm{E \ar@{>>}[rr]_{p} & & L \ar@/_/[ll]_{s}}$ is a split
central extension, it is a direct product of $K$-Lie algebras $E = \Ker p \times L$,  which is also a Lie--Rinehart algebra.
\begin{Pro}
If $L$ is $\A$-projective, then  $H^2_{\Rin}(L,I)$ classifies the central
extensions
\[ I  \xrightarrow { \  \ } E \xrightarrow { \  \ }  L \]
of $L$ by $I$.
\end{Pro}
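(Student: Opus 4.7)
The plan is to deduce this proposition as a special case of Proposition \ref{P:ab_ext}. Concretely, I would show that central extensions of $L$ by $I$ correspond bijectively to abelian extensions of $L$ by $I$ regarded as a \emph{trivial} left Lie--Rinehart $(\A,L)$-module, and that $\A$-projectivity of $L$ forces every such extension to be $\A$-split, so that Proposition \ref{P:ab_ext} applies verbatim.

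First, starting from a central extension $I \xrightarrow{i} E \xrightarrow{p} L$, I would check it is an abelian extension with trivial $(\A,L)$-action on $I$. Since $p$ is a Lie--Rinehart homomorphism, $\alpha_E \circ i = \alpha_L \circ p \circ i = 0$, so $I$ has trivial anchor in $E$. Because $i(I) = \Ker p \subseteq \zena(E)$, the two identities of Definition \ref{D:ab_ext} become $[i(m),i(n)]=0$ (immediate) and $[i(m),x']=0=\partial(x')(m)$, the latter forcing the induced $L$-action on $I$ to be trivial and confirming $I$ is a trivial $(\A,L)$-module. Since $L$ is $\A$-projective and $p$ is in particular a surjection of $\A$-modules, there is an $\A$-linear section $L \to E$, so the extension is $\A$-split. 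Hence every central extension of $L$ by $I$ is an $\A$-split abelian extension of $L$ by the trivial module $I$.

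Conversely, starting from an $\A$-split abelian extension $I \xrightarrow{i} E \xrightarrow{p} L$ in which $I$ carries the trivial $(\A,L)$-module structure, I would verify that $i(I) \subseteq \zena(E)$. The identities $[i(m),i(n)]=0$ and $[i(m),x']=\partial(x')(m)=0$ (triviality of the action) hold by Definition \ref{D:ab_ext}; $\A$-linearity of $i$ gives $a\cdot i(m)=i(am)$, hence $[a\cdot i(m),x']=[i(am),x']=0$ as well. Therefore $i(I)$ is contained in $\zena(E)$ and the extension is central. These two correspondences are plainly inverse to each other and respect homomorphisms over $L$, so they induce a bijection on equivalence classes.

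Combining both directions with Proposition \ref{P:ab_ext} applied to $\MM = I$ (with trivial $L$-action) yields the desired classification by $H^2_{\Rin}(L,I)$. The only potentially delicate point is the matching of equivalence relations on both sides, but since a morphism of central extensions is automatically a morphism of the associated abelian extensions (and vice versa, as both are Lie--Rinehart homomorphisms over $L$ restricting to the identity on $I$), no extra work is required. The main conceptual step — and the only place where the hypothesis on $L$ enters — is the use of $\A$-projectivity to secure an $\A$-linear splitting, which makes Proposition \ref{P:ab_ext} directly applicable.
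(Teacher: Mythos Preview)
Your proof is correct and follows essentially the same approach as the paper: identify central extensions with abelian extensions by the trivial $(\A,L)$-module and invoke Proposition~\ref{P:ab_ext}. You are in fact more careful than the paper's one-line proof, since you make explicit the use of $\A$-projectivity to obtain the $\A$-linear splitting required by Proposition~\ref{P:ab_ext}, and you verify both directions of the correspondence.
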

\begin{proof}
Note that, if $I$ is a trivial left Lie--Rinehart $(\A,L)$-module, then an abelian extension of $L$ by $I$ is a central extension, and so the assertion follows by Proposition \ref{P:ab_ext}.
\end{proof}

A  Lie--Rinehart $\A$-algebra  $L$ is said \emph{perfect} if $L=\{L,L\}$.
A central extension $E$ of $L$ is called a \emph{covering} if $E$ is
perfect; in that case, $L$ is also perfect.

A central extension $\uce \colon  \lle \to L$ is called \emph{universal} if there exists a unique
homomorphism from $\uce$ to any other central extension of $L$.
From the universal property of universal central extensions it
immediately follows that two universal central extensions of $L$
are isomorphic as extensions.

\begin{Le} \textbf{(central trick)}
\label{ctrick}
Let $p\colon  E \xtwoheadrightarrow { \ }L$  be a central extension.
\begin{itemize}
\item[(a)] If $p(x) = p(x')$ and $p(y) = p(y')$ then $[x,y] = [x', y']$
and for every $a \in \A$, $x(a) = x'(a)$.
\item[(b)] If the following
diagram commutes in ${\normalfont \lrak}$,
\[\xymatrix@C=0.7cm{
P \ar@<-0.5ex>[r]_{f} \ar@<0.5ex>[r]^{g} & E \ar@{>>}[r]^{p} & L }
\]
then the restriction of both $f$ and $g$ to $\{P,P\}$ agree; i.e.,
$f|_{\{P,P\}} = g|_{\{P,P\}}$.
\end{itemize}
\end{Le}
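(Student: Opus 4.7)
The plan is to translate both hypotheses into statements about $\ker p$. From $p(x) = p(x')$ and $p(y) = p(y')$ the differences $z := x - x'$ and $w := y - y'$ lie in $\ker p$, which by hypothesis is contained in $\zena(E)$ and, being a kernel, is in particular an ideal (so acts trivially on $\A$).

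For part~(a), I would treat the two conclusions separately. The anchor equality $x(a) = x'(a)$ follows immediately from the commuting triangle that defines a Lie--Rinehart morphism: this forces the anchor of $z \in \ker p$ to vanish, so $z(a) = 0$ for every $a \in \A$, and hence $x(a) = x'(a) + z(a) = x'(a)$. For the bracket equality, expand by $K$-bilinearity
\[
[x, y] = [x' + z, y' + w] = [x', y'] + [x', w] + [z, y'] + [z, w].
\]
Specialising the definition of $\zena(E)$ to $a = 1$ yields $[z, u] = [w, u] = 0$ for every $u \in E$, so the last three summands vanish and $[x, y] = [x', y']$.

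For part~(b), recall that $\{P, P\}$ is $K$-spanned by elements of the form $a[u, v]$ with $a \in \A$ and $u, v \in P$. Since $f$ and $g$ are $\A$-linear Lie $K$-algebra homomorphisms, $f(a[u, v]) = a[f(u), f(v)]$ and $g(a[u, v]) = a[g(u), g(v)]$. The hypothesis $p \circ f = p \circ g$ gives $p(f(u)) = p(g(u))$ and $p(f(v)) = p(g(v))$; part~(a) then yields $[f(u), f(v)] = [g(u), g(v)]$, so $f$ and $g$ agree on each generator and, by $K$-linearity, on all of $\{P, P\}$.

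No serious obstacle is expected; the only subtle point is that the anchor conclusion in~(a) does \emph{not} follow from centrality alone (centrality only controls expressions of the form $u(a)z$), but relies on the anchor-compatibility axiom for the Lie--Rinehart morphism $p$, equivalently on the fact that ideals of a Lie--Rinehart algebra act trivially on $\A$.
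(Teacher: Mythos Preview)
Your argument is correct and follows essentially the same route as the paper's proof: write the primed elements as the unprimed ones plus central kernel elements, use centrality to kill the cross terms in the bracket, invoke anchor-compatibility of $p$ for the action on $\A$, and then reduce (b) to (a) on the generators $a[u,v]$ of $\{P,P\}$. If anything, your treatment is more explicit than the paper's, particularly in isolating that the anchor conclusion in (a) relies on the morphism axiom (equivalently, that kernels act trivially on $\A$) rather than on centrality.
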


\begin{proof}
(a) We have $x' = x + z$ and $y' = y + z'$ for some $z, z' \in \Ker p \subset \zena E$, so it is clear that $[x', y'] = [x + z, y + z'] = [x, y]$.
 In addition, if $p$ is a Lie--Rinehart homomorphism, the action on $\dka$ must be preserved so $x(a) = x(a')$.

(b) Using part (a), we have $g(a[x,y]) = a[g(x), g(y)] = a[f(x), f(y)] = f(a[x,y])$.
\end{proof}

\begin{Le}
\label{perfect}
Let $p\colon  E \xtwoheadrightarrow { \ }L$ be a central extension where $L$ is perfect. Then
\begin{enumerate}
\item[(a)] $E = \{E,E\} + \Ker p$, and $p' = p_{\{E,E\}} \colon \conm{E} \xtwoheadrightarrow { \ } L$ is a covering.
\item[(b)] $\zena E = p^{-1}(\zena L)$ and $p(\zena E) = \zena L$.
\item[(c)] If $f \colon L \xtwoheadrightarrow { \ } M $ is a central extension then so is $fp \colon E \xtwoheadrightarrow { \ } M$.
\item[(d)] If $f \colon C \xtwoheadrightarrow { \ } L$ is a covering and
\[
\xymatrix{
  E \ar[rr]^{g} \ar@{>>}[dr]_-{p}
                &  &    C \ar@{>>}[dl]^-{f}    \\
                & L                 }
\]
a morphism of extensions, then $g \colon E \xtwoheadrightarrow { \ } C$ is a central extension. In particular,
$g$ is surjective.
\end{enumerate}
\end{Le}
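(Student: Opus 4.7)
The plan is to prove items (a)--(d) in order, systematically using the hypotheses $L=\{L,L\}$ and $\Ker p\subseteq\zena E$, together with the central trick (Lemma~\ref{ctrick}).

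For (a), any $\ell\in L$ decomposes as $\sum a_i[\ell_i,\ell_i']$; choosing lifts $\tilde\ell_i,\tilde\ell_i'\in E$ yields $\sum a_i[\tilde\ell_i,\tilde\ell_i']\in\{E,E\}$ projecting to $\ell$, so $p(\{E,E\})=L$ and hence $E=\{E,E\}+\Ker p$. To see $\{E,E\}$ is perfect, for any generator $a[x,y]$ I write $x=x_0+k_x$ and $y=y_0+k_y$ with $x_0,y_0\in\{E,E\}$ and $k_x,k_y\in\Ker p\subseteq\zena E$; the central trick erases the cross-terms, giving $[x,y]=[x_0,y_0]$, so $a[x,y]\in\{\{E,E\},\{E,E\}\}$. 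Moreover $p':=p|_{\{E,E\}}$ is central because $\{E,E\}\cap\Ker p\subseteq\zena E\cap\{E,E\}\subseteq\zena\{E,E\}$.

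For (b), the inclusion $p(\zena E)\subseteq\zena L$ is straightforward: $x\in\zena E$ forces $p([ax,z])=[ap(x),p(z)]=0$ for every $a,z$, and surjectivity of $p$ yields $p(x)\in\zena L$. For the reverse inclusion, fix $x$ with $p(x)\in\zena L$ and arbitrary $a\in\A$, $z\in E$. Part (a) lets me decompose $z=\sum a_i[u_i,v_i]+k$ with $k\in\Ker p\subseteq\zena E$, so $[ax,k]=0$. For each summand, Jacobi together with $[ax,u_i],[ax,v_i]\in\Ker p\subseteq\zena E$ (both immediate from $p(x)\in\zena L$) yields $[ax,[u_i,v_i]]=0$. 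The remaining Leibniz contributions $(ax)(a_i)[u_i,v_i]=a\,x(a_i)[u_i,v_i]$ have to be annihilated using the identities $p(x)(b)\cdot\ell=0$ and $\ell(b)\cdot p(x)=0$ in $L$ implied by $p(x)\in\zena L$, transported back to $E$ via the central trick. This yields $[ax,z]=0$, so $\zena E=p^{-1}(\zena L)$, and in particular $p(\zena E)=\zena L$.

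Parts (c) and (d) follow quickly from (a) and (b). For (c), $fp$ is surjective and $\Ker(fp)=p^{-1}(\Ker f)\subseteq p^{-1}(\zena L)=\zena E$ by (b). For (d), surjectivity of $g$ exploits $C=\{C,C\}$: given $c,c'\in C$, I lift $f(c),f(c')\in L$ along $p$ to get $e,e'\in E$. Then $g(e)-c\in\Ker f\subseteq\zena C$, and similarly for $e'$, so the central trick applied to the covering $f$ gives $[c,c']=[g(e),g(e')]=g([e,e'])$; hence $a[c,c']=g(a[e,e'])\in\Im g$, and perfection of $C$ forces $\Im g=C$. Centrality of $g$ is immediate: $\Ker g\subseteq\Ker(fg)=\Ker p\subseteq\zena E$.

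The main obstacle is the reverse inclusion in (b). In the pure Lie-algebra case ($\A=K$) no anchor appears and Jacobi alone closes the argument, but over a non-trivial base algebra the Leibniz corrections $x(a_i)[u_i,v_i]$ introduce new terms that must be neutralised using the stronger identities satisfied by central elements of a Lie--Rinehart algebra; all subsequent items of the lemma reduce cleanly to this step.
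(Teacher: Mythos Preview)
Your route through (a), (c) and (d) is the paper's route: in (a) you use $p(\{E,E\})=\{L,L\}=L$ and the decomposition $x=x_0+k_x$ to get perfectness of $\{E,E\}$ (the paper just asserts ``clearly a covering''); in (c) you use $\Ker(fp)=p^{-1}(\Ker f)\subseteq p^{-1}(\zena L)=\zena E$, exactly as the paper does; in (d) you lift $f(c),f(c')$ through $p$ and apply the central trick inside the covering $f$ to get $[c,c']=[g(e),g(e')]$, which is again what the paper does (it phrases it as $C=\{C,C\}=\{g(E),g(E)\}=g(\{E,E\})$, citing Lemma~\ref{ctrick}). These parts are fine.

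The gap is in (b). You correctly isolate the Leibniz correction $(ax)(a_i)[u_i,v_i]=a\,x(a_i)[u_i,v_i]$---a term the paper simply drops when it writes $[az,b[x,y]]=b[az,[x,y]]$---but your remedy is not an argument. The identities $p(x)(b)\,\ell=0$ and $\ell(b)\,p(x)=0$ live in $L$; ``transporting them back via the central trick'' only yields $x(a_i)[u_i,v_i]\in\Ker p$, because Lemma~\ref{ctrick} controls \emph{brackets} modulo $\Ker p$, not scalar multiples $c\cdot e$ with $c\in\A$. Knowing that $c:=x(a_i)$ annihilates $L$ gives $cu_i,cv_i\in\Ker p\subseteq\zena E$, whence $c[u_i,v_i]=[cu_i,v_i]+v_i(c)u_i=v_i(c)u_i$; this lies in $\Ker p$ again but is not visibly zero. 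So the reverse inclusion $p^{-1}(\zena L)\subseteq\zena E$ is not established by what you wrote. Since you explicitly flagged this term as ``the main obstacle'', you owe an actual computation showing it vanishes, not an appeal to Lemma~\ref{ctrick}; as it stands your proof of (b) has the same hole as the paper's, just more honestly labelled.
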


\begin{proof}
(a) Since $p(\conm{E}) = \conm{L} = L$ it follows easily that $E = \conm{E} + \Ker p$ and $p_{\{E,E\}}$ is clearly a covering.

(b) Let $z \in \zena(A)$. For every $a \in \A$ we have $[az, E] = 0$, so $0 = [ap(z), p(E)] = [ap(z), L]$ then $p(z) \in \zena(L)$.
 Conversely, let $z \in p^{-1}\big(\zena(L)\big)$. For every $a \in \A$ we have $p([az, E]) = [ap(z), L] = 0$ so
  $[az, E]\subset \Ker p\subset \zena(E)$. Since $[az, E] = [az, \conm{E} + \Ker p] = [az, \conm{E}]$ we just have to
   check that $[az, \conm{E}]$ is zero. Therefore, $\big[az, b[x,y]\big] = b\big[az, [x,y]\big] = b\big[x, [az, y]\big] + b\big[y, [x, az]\big] = 0$.

(c) It is clearly surjective and $\Ker fp = p^{-1}(\Ker f) \subset p^{-1}\big(\zena(L)\big) = \zena(E)$.

(d) By Lemma \ref{ctrick}(b) we have that $C = \conm{C} = \conm{g(E)} = g(\conm{E})$ so $g$ is surjective. Moreover, it is central since $\Ker g \subset \Ker p$.
\end{proof}

\begin{Co}
\label{corzentr}
Let $L \in {\normalfont \lrak}$, arbitrary. If $L/\zena L$ is perfect, then $\zena(L/\zena L) = 0$.
\end{Co}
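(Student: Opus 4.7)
The plan is to realize the canonical projection $\pi\colon L\twoheadrightarrow L/\zena(L)$ as a central extension and then to apply part (b) of Lemma~\ref{perfect}, which is exactly the tool designed to compare centers along a central extension with perfect quotient.

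First I would set $Z=\zena(L)$ and verify that $Z$ is an ideal of $L$ in the Lie--Rinehart sense, so that the quotient $L/Z$ inherits a Lie--Rinehart structure and $\pi\colon L\twoheadrightarrow L/Z$ becomes a morphism in $\lrak$. The surjection $\pi$ has kernel $Z\subseteq \zena(L)$ by definition, so it is a central extension of $L/Z$.

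Next, since $L/Z$ is perfect by hypothesis, I can apply Lemma~\ref{perfect}(b) to $\pi$, playing the role of $p\colon E\twoheadrightarrow L$ in that lemma (with our $L$ taking the role of $E$ and our $L/Z$ taking the role of $L$). This yields $\pi\bigl(\zena(L)\bigr)=\zena(L/Z)$. But by construction $\zena(L)=Z=\Ker\pi$, so $\pi\bigl(\zena(L)\bigr)=0$, which forces $\zena(L/Z)=0$, as required.

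The only delicate point I anticipate is the bookkeeping in the first step: making sure the center $\zena(L)$ really is an ideal in the Lie--Rinehart sense (i.e.\ that the anchor vanishes on $Z$, so that the induced action on $\dka$ is trivial and the quotient is a Lie--Rinehart algebra rather than merely a quotient Lie $K$-algebra). Once that is in place, the rest is a one-line application of Lemma~\ref{perfect}(b), so there is no genuinely hard step in the argument.
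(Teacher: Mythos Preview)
Your proof is correct and matches the paper's own argument essentially line for line: both apply the second formula of Lemma~\ref{perfect}(b) to the canonical central extension $\pi\colon L\twoheadrightarrow L/\zena(L)$ and conclude from $\pi(\zena L)=0$ that $\zena(L/\zena L)=0$. Your added remark about checking that $\zena(L)$ is an ideal in the Lie--Rinehart sense (trivial anchor) is a reasonable piece of bookkeeping that the paper leaves implicit.
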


\begin{proof}
It can be seen applying the second formula of Lemma \ref{perfect}(b) to the canonical map $p \colon \xymatrix@C=0.5cm{L \ar@{>>}[r] & L/\zena L}$, which is a central extension.
\end{proof}

\begin{Le} \textbf{(pullback Lemma)}
\label{pblemma}
Let $c \colon  N \xtwoheadrightarrow { \ } M$ be a central extension and $f \colon L \to M$ a morphism of Lie--Rinehart
algebras, then,
\[P := \{(l,n) \in L \times_{\normalfont \dka} N : f(l) = c(l) \} \]
is a Lie--Rinehart algebra and $p_L \colon P \to L$, $(l,n) \mapsto l$, is a central extension.
This extension splits if and only if there exists a (unique) Lie--Rinehart morphism $h \colon L \to N$
such that $ch = f$.

\begin{displaymath}
\xymatrix{ P \ar[r]^{p_N} \ar@{>>}[d]^{p_L}                                       & N \ar@{>>}[d]^c \\
              L \ar [r]_f  \ar@{.>}[ru]_h   \ar@/^/@{.>}[u]^s                   & M  }
\end{displaymath}
\end{Le}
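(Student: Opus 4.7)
The plan is to verify in turn that $P$ inherits a Lie--Rinehart structure from the categorical product $L \times_{\dka} N$, that $p_L \colon P \to L$ is a central extension, and that splittings of $p_L$ correspond bijectively to lifts $h \colon L \to N$ of $f$. (Note that the set-builder condition should read $f(l) = c(n)$ rather than $f(l) = c(l)$.)

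For the Lie--Rinehart structure on $P$, recall that $L \times_{\dka} N$ is itself the product in $\lrak$ as noted in Section~\ref{S:Prel}, so it suffices to show that $P$ is closed under the bracket and the $\A$-action of that product. Using that $f$ and $c$ are Lie--Rinehart morphisms,
\[
f([l,l']) = [f(l),f(l')] = [c(n),c(n')] = c([n,n']), \qquad f(al) = af(l) = ac(n) = c(an),
\]
so $\bigl([l,l'],[n,n']\bigr)$ and $(al,an)$ lie in $P$ whenever $(l,n),(l',n') \in P$. The auxiliary condition $l(a) = n(a)$ for membership in $L \times_{\dka} N$ is automatic once $f(l) = c(n)$, since $f$ and $c$ preserve anchors.

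Next I would check that $p_L$ is a central extension. For surjectivity, given $l \in L$, the surjectivity of $c$ yields $n \in N$ with $c(n) = f(l)$, and then the anchor-preservation of $f$ and $c$ forces $n(a) = l(a)$, so $(l,n) \in P$. The kernel is $\{(0,n) \mid n \in \Ker c\}$; I would show it lies in $\zena(P)$ by computing $[a(0,n),(l',n')] = (0,[an,n'])$ and invoking $\Ker c \subseteq \zena(N)$, whose defining property $[an,n'] = 0$ for all $a \in \A$, $n' \in N$ is exactly what is needed.

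Finally the splitting correspondence is formal: a Lie--Rinehart morphism $h \colon L \to N$ with $ch = f$ produces the section $s(l) := (l,h(l))$ of $p_L$, and conversely a section $s$ produces $h := p_N \circ s$, which satisfies $c(h(l)) = c(p_N(s(l))) = f(p_L(s(l))) = f(l)$ by the defining condition of $P$. These assignments are mutually inverse, giving the claimed bijection (in particular, $h$ is the unique lift compatible with a given $s$, and vice versa). I do not foresee any genuine obstacle; the only care required is tracking both defining conditions of $P$ (the anchor equality and $f(l) = c(n)$) through each structural verification.
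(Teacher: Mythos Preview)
Your proposal is correct and follows the same route as the paper's own proof, which is extremely terse: the paper simply asserts that $P$ is a Lie--Rinehart algebra with anchor $(l,n)(a)=l(a)=n(a)$, that $p_L$ is central, and that a splitting $s$ exists (uniquely) if and only if there is a (unique) $h$ with $s(l)=\bigl(l,h(l)\bigr)$. You have merely supplied the verifications the paper leaves implicit, and you also correctly flag the typo $f(l)=c(l)$ for $f(l)=c(n)$.
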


\begin{proof}
It is clear that $P$ is a Lie--Rinehart algebra with action $(l,n)(a) = l(a) = n(a)$, and $p_L$ is a central extension.
 Moreover, a splitting homomorphism $s \colon L \to P$ exists (uniquely) if and only if there exists a (unique) Lie--Rinehart
 homomorphism $h \colon L \to N$ such that $s(l) = \big(l, h(l)\big)$ for all $l \in L$.
\end{proof}

\begin{Th} \textbf{(characterization of universal central extensions)}
\label{T:ucemain}
For a Lie--Rinehart algebra $L$, there are equivalent:
\begin{enumerate}
\item \label{1} Every central extension $L' \to L$ splits uniquely.
\item \label{2} $1_L \colon L \to L$ is a central extension. \\[.2cm]
\noindent If $\uce \colon L \to M$ is a central extension, then \eqref{1} and \eqref{2} are equivalent to
\item $\uce \colon L \to M$ is a universal central extension of $M$.
In this case,
\begin{enumerate}
\item both $L$ and $M$ are perfect and
\item $\zena L = \uce^{-1}(\zena L) = \zena M$.
\end{enumerate}
\end{enumerate}
\end{Th}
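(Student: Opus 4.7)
The plan is to handle the equivalence (1)$\Leftrightarrow$(2) by unraveling definitions, link them to (3) via standard universal-property arguments combined with the pullback construction of Lemma~\ref{pblemma}, and finally deduce (a) and (b) from what has been assembled.

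For (1)$\Leftrightarrow$(2), I would note that a morphism from $1_L \colon L \to L$ to a central extension $p \colon E \twoheadrightarrow L$ in the over-category of central extensions of $L$ is exactly a Lie--Rinehart map $s \colon L \to E$ with $p s = 1_L$, i.e., a splitting; so $1_L$ being universal is literally the assertion that every such $p$ splits uniquely. For (3)$\Rightarrow$(1), given any central extension $p \colon E \twoheadrightarrow L$, I would first invoke Lemma~\ref{perfect}(c) to see that $\uce \circ p \colon E \to M$ is again a central extension (which uses $L$ perfect, established in (a)); universality of $\uce$ then supplies a unique $h \colon L \to E$ with $\uce p h = \uce$, and applying uniqueness once more to the pair of endomorphisms $1_L$ and $p h$ of $L$ (both satisfying $\uce \circ (\cdot) = \uce$) forces $p h = 1_L$, so $h$ is a splitting of $p$, and the same argument gives its uniqueness. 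Conversely, for (1)$\Rightarrow$(3), given any central extension $c \colon N \twoheadrightarrow M$, I would form the pullback $P := L \times_M N$ of Lemma~\ref{pblemma} and apply (1) to obtain a unique splitting $s \colon L \to P$ of the central extension $p_L \colon P \to L$; the composite $h := p_N \circ s \colon L \to N$ is then the required morphism $\uce \to c$, with its uniqueness reducing to the uniqueness of~$s$.

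The main obstacle is the perfection of $L$ in (a). Assuming (1), I would argue by contradiction: if $\{L,L\} \neq L$, hence $L^{\ab} \neq 0$, the plan is to build a central extension $E \twoheadrightarrow L$ admitting two distinct splittings. A natural candidate is $E := L \oplus V$ as an $\A$-module for some non-zero $\A$-module $V$ treated as an abelian Lie--Rinehart algebra with zero anchor and trivial $L$-action, the two splittings being $l \mapsto (l,0)$ and $l \mapsto (l, \phi(l))$ for a non-zero $\A$-linear map $\phi \colon L^{\ab} \to V$. The genuine delicacy here, absent from the pure Lie-algebra case, is that a trivial $L$-action on $V$ combined with the Lie--Rinehart axiom $x(av) = a(xv) + x(a)v$ forces $x(a) v = 0$ for all $x \in L$, $a \in \A$, $v \in V$ (compare Remark~\ref{rem}); consequently $V$ must be annihilated by the $\A$-ideal $I := \langle x(a) : x \in L,\, a \in \A \rangle$, so one would take $V$ to be a non-zero quotient of $L^{\ab} / (I \cdot L^{\ab})$, whose non-vanishing in the relevant cases must be verified. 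Once $L$ is perfect, $M$ is perfect too, since $M = \uce(L) = \uce(\{L,L\}) = \{M,M\}$, and (b) follows at once from Lemma~\ref{perfect}(b) applied to the central extension~$\uce$.
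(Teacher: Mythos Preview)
Your plan mirrors the paper's: (1)$\Leftrightarrow$(2) by definition, (1)$\Rightarrow$(3) via the pullback of Lemma~\ref{pblemma}, (3)$\Rightarrow$(1) via Lemma~\ref{perfect}(c), and (b) from Lemma~\ref{perfect}(b). There are, however, two points to address.

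First, your argument is circular as organized. You derive (a) from (1), but your proof of (3)$\Rightarrow$(1) already invokes (a) (Lemma~\ref{perfect}(c) has ``$L$ perfect'' as a standing hypothesis). Thus, starting from (3) alone you cannot reach (1). The paper avoids this by proving (a) directly from (3): it exhibits $\bar\uce\colon L \times L^{\ab} \to M$, $(x,\bar y)\mapsto \uce(x)$, as a central extension of $M$ and notes that $x\mapsto(x,\bar x)$ and $x\mapsto(x,0)$ are two morphisms $\uce\to\bar\uce$; universality of $\uce$ forces them to agree, whence $L^{\ab}=0$. Your own construction adapts easily: take $V=L^{\ab}$, compose your map $L\oplus V\twoheadrightarrow L$ with $\uce$, and check directly that $\Ker\uce\oplus V\subseteq\zena(L\oplus V)$ (no appeal to Lemma~\ref{perfect}(c) is needed here, since $\Ker\uce\subseteq\zena(L)$ and $V$ is central by construction).

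Second, your worry about the possible vanishing of $L^{\ab}/(I\cdot L^{\ab})$ dissolves once you observe that $I\cdot L^{\ab}=0$ always holds: for any $x,y\in L$ and $a\in\A$ one has $x(a)\,y=[x,ay]-a[x,y]\in\{L,L\}$, so $x(a)\,\bar y=0$ in $L^{\ab}$. This is precisely the computation the paper carries out when verifying that $L\times L^{\ab}$, with anchor $(x,\bar y)\mapsto\alpha(x)$, satisfies the Lie--Rinehart identity. Hence the canonical choice $V=L^{\ab}$ with $\phi$ the quotient map works without any further quotient, and the ``delicacy'' you flag is exactly what the paper's short calculation resolves.
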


\begin{proof}
By definition, (1) is equivalent to (2). Suppose that (3) holds, we want to proof (a). Let be the product as $K$-Lie algebras
 $L \times L / \conm{L}$. In this case, this is actually a Lie--Rinehart algebra, with the usual operations and action $(x, y + \conm{L})(a) = x(a)$, because
\[
[(x, y + \conm{L}), a(x', y' + \conm{L})] = (a[x, x'], 0) + (x(a)x', 0)
\]
and
\[
a[(x, y + \conm{L}), (x', y' + \conm{L})] = (a[x, x'], 0),
\]
\begin{multline*}
(x, y + \conm{L})(a)(x', y' + \conm{L}) = (x(a)x', x(a)y' + \conm{L})\\
= (x(a)x', [x,ay'] - a[x, y'] + \conm{L}) = (x(a)x', 0).
\end{multline*}

Now we can define the central extension $\bar{\uce} \colon L \times L / \conm{L} \to M$, and two maps $f$ and $g$
\[
    \xymatrix{ L \ar[rr]^\uce \ar@<-1ex>[rd]_g \ar[rd]^f & & M \\
                & L \times L / \conm{L} \ar[ru]_{\bar{\uce}} &}
\]
where $f(x) = (x, x + \conm{L})$ and $g(x) = (x, 0)$. Since $\uce$ is universal, $f$ and $g$ must be equal, so $L/ \conm{L}$ = 0. By the surjectivity of $\uce$, $M$ is perfect too. The assertion (b) is consequence of Lemma \ref{perfect}(b).

We can proof now (3) $\Rightarrow$ (1). Let $f \colon L' \to L$ a central extension. By Lemma \ref{perfect}(c) $\uce f$ is a central extension too, so by the universality of $\uce$, it exists $g \colon L \to L'$ such that $\uce fg = \uce$ and by Lemma \ref{ctrick}(b) $fg = 1_L$.

To show (1) $\Rightarrow$ (3), for a central extension $f \colon N \to M$ we construct as in Lemma \ref{pblemma} the central extension $p_L$. Since $p_L$ splits uniquely, by Lemma \ref{pblemma} it exists a unique map $h \colon L \to N$ such that $fh = \uce$
\end{proof}

\begin{Co}
\label{compo}
Let $f \colon E \to L$ and $g \colon L \to M$ be central extensions. Then $gf \colon E \to M$ is a universal central extension
if and only if $f$ is a universal central extension.
\end{Co}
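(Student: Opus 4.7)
The plan is to observe that both statements reduce, via Theorem \ref{T:ucemain}, to the condition that \emph{every} central extension of $E$ splits uniquely---a property that depends only on the algebra $E$, not on the target of any particular central extension from it.

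First I would handle the ``only if'' direction. Assume $gf\colon E\to M$ is a universal central extension. By Theorem \ref{T:ucemain}, this is equivalent to condition (1) for $E$: every central extension $E'\to E$ splits uniquely. But $f\colon E\to L$ is, by hypothesis, a central extension with domain $E$, so applying the equivalence (1)$\Leftrightarrow$(3) of Theorem \ref{T:ucemain} to $f$ immediately yields that $f$ is a universal central extension.

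For the converse, assume $f\colon E\to L$ is universal. Two things must be verified: that $gf$ is a central extension of $M$, and that it is universal. For the first, composition of surjections is surjective, and
\[
\Ker(gf)=f^{-1}(\Ker g)\subseteq f^{-1}(\zena L)=\zena E,
\]
where the inclusion uses that $g$ is central and the last equality is Theorem \ref{T:ucemain}(3)(b); alternatively one may invoke Lemma \ref{perfect}(c), since $L$ is perfect by Theorem \ref{T:ucemain}(3)(a). For universality, the hypothesis on $f$ together with Theorem \ref{T:ucemain} again tells us every central extension of $E$ splits uniquely, so applying the equivalence (1)$\Leftrightarrow$(3) of Theorem \ref{T:ucemain} now to the central extension $gf$ gives that $gf$ is universal.

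There is no real obstacle here; the only thing that requires a tiny computation is verifying that $gf$ is genuinely a central extension in the ``if'' direction, and this is a one-line consequence of Theorem \ref{T:ucemain}(3)(b) (or of Lemma \ref{perfect}(c) applied to the perfect algebra $L$). The whole content of the corollary is the observation that the characterization in Theorem \ref{T:ucemain} makes universality a property purely of the source of the extension.
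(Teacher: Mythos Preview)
Your proof is correct and follows essentially the same approach as the paper's: both arguments reduce the question to the intrinsic characterization of universality in Theorem~\ref{T:ucemain} (conditions (1)/(2)), which depends only on $E$, and both handle the one nontrivial point---that $gf$ is actually a central extension---via Lemma~\ref{perfect}(c) and the perfectness of $L$. The paper compresses this into the single chain ``$f$ universal $\Leftrightarrow$ $1_E$ universal $\Leftrightarrow$ $gf$ universal'', while you spell out each direction separately, but the content is identical.
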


\begin{proof}
The extension $gf$ is central because $E$ is perfect, so we can apply Lemma \ref{perfect}(c). Hence, $f$ is universal if and only if $1_E \colon E \to E$ is universal, if and only if $gf$ is universal.
\end{proof}

\begin{Co}
Let $L$ and $L'$ be perfect Lie--Rinehart algebras, with universal central extensions $\uce \colon \lle \to L$ and
$\uce' \colon \lle' \to L'$ respectively. Then
\[ L/\zena(L) \cong L'/\zena(L') \ \Longleftrightarrow \ \lle \cong \lle'.\]
\end{Co}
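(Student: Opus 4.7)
The plan is to deduce both directions from a single auxiliary claim: for a perfect Lie--Rinehart algebra $L$ with universal central extension $\uce\colon\lle\to L$, the composite $\pi_L\uce\colon\lle\twoheadrightarrow L/\zena(L)$ with the canonical projection $\pi_L$ is again a universal central extension, and in particular induces an isomorphism $\lle/\zena(\lle)\cong L/\zena(L)$.

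To prove this, I would first note that $\pi_L$ is a central extension since $\ker\pi_L=\zena(L)$. By Lemma~\ref{perfect}(b) applied to the central extension $\uce\colon\lle\to L$ (whose base $L$ is perfect), one has $\zena(\lle)=\uce^{-1}(\zena L)$, so $\ker(\pi_L\uce)=\zena(\lle)$ and the composite is a central extension. Corollary~\ref{compo}, applied with $f=\uce$ (universal) and $g=\pi_L$ (central), then gives that $\pi_L\uce$ is universal. Since $\ker(\pi_L\uce)=\zena(\lle)$, the surjection $\pi_L\uce$ descends to the desired isomorphism $\lle/\zena(\lle)\xrightarrow{\sim}L/\zena(L)$.

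For the implication $L/\zena(L)\cong L'/\zena(L')\Longrightarrow\lle\cong\lle'$, I would precompose the assumed isomorphism with $\pi_L\uce$ and $\pi_{L'}\uce'$ to obtain two universal central extensions of the same Lie--Rinehart algebra; since any two universal central extensions of a fixed object are canonically isomorphic (a direct consequence of the universal property, as noted just after the definition of universal central extension), this gives $\lle\cong\lle'$. Conversely, any Lie--Rinehart isomorphism $\lle\cong\lle'$ carries $\zena(\lle)$ to $\zena(\lle')$ and so descends to an isomorphism on the quotients; combined with the two isomorphisms from the auxiliary claim, this yields $L/\zena(L)\cong L'/\zena(L')$.

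The only genuine step is verifying the auxiliary claim; once Lemma~\ref{perfect}(b) and Corollary~\ref{compo} are combined to promote $\uce$ to a universal central extension of $L/\zena(L)$, the rest is a formal transport of structure along the two universal properties.
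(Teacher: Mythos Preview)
Your proof is correct and follows essentially the same approach as the paper: both use Corollary~\ref{compo} to show that $\pi_L\uce$ is a universal central extension of $L/\zena(L)$, use Lemma~\ref{perfect}(b) to identify $\ker(\pi_L\uce)=\zena(\lle)$, deduce the forward implication from uniqueness of universal central extensions, and deduce the backward implication from the fact that an isomorphism $\lle\cong\lle'$ preserves centers. Your packaging of the key facts into a single auxiliary claim (yielding $\lle/\zena(\lle)\cong L/\zena(L)$) is slightly cleaner than the paper's presentation, but the underlying argument is the same.
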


\begin{proof}
Given the diagram
\[
    \xymatrixcolsep{4pc}\xymatrix{ \lle \ar[r]^\uce \ar[d]_\phi & L  \ar[r]^-\pi & L/ \zena(L) \ar[d]^\varphi \\
               \lle' \ar[r]^{\bar{\uce}}       & L' \ar[r]^-{\pi'} & L/ \zena(L'), }
\]
we know that $\phi$ exists and is an isomorphism if and only if $\varphi$ exists and is an isomorphism.
 Since $\pi \uce$ and $\pi'\uce'$ are universal central extensions by Corollary \ref{compo} and $L/ \zena(L)$
  is isomorphic to $L'/ \zena(L')$, by the uniqueness of the universal central extension, $\lle \cong \lle'$.
   Conversely, by Corollary \ref{corzentr} $L/ \zena(L)$ is centreless.
   By Lemma \ref{perfect} (b) $\zena(\lle) = \Ker(\pi\uce)$ and $\zena(\lle') = \Ker(\pi'\uce')$.
    Therefore, $\Ker(\pi'\uce'\phi) = \phi^{-1}\big(\Ker(\pi'\uce')\big) = \phi^{-1}\big(\zena(\lle')\big) = \zena(\lle) = \Ker (\pi\uce)$.
     Since $\pi\uce$ and $\pi'\uce'\phi$ are surjective, $\varphi$ exists and is an isomorphism.
\end{proof}

Note that the results obtained in this section generalize classic results of Lie algebras (see \cite{Gar}).

\subsection{The functor $\ucea$}

Let $L$ be a Lie--Rinehart  $\A$-algebra. We
denote by $M_{\A}L$ the $\A$-submodule of $\A \otimes_K L \otimes_K L$
spanned by the elements of the form
\begin{enumerate}
\item $a \otimes x \otimes x$
\item $a \otimes x \otimes y + a \otimes y \otimes x$
\item $a \otimes x \otimes [y,z] + a \otimes y \otimes [z,x] + a \otimes z \otimes [x,y]$
\item $a \otimes [x,y] \otimes [x',y'] + [x,y](a) \otimes x' \otimes y' - 1 \otimes [x,y] \otimes a[x',y']$
\end{enumerate}
with $x, x', y, y', z \in L$ and $a \in \A$, and put
\[\ucea L :=  \A \otimes_K L \otimes_K L/ M_{\A}L.\]
We shall write $(a,x,y) := a \otimes x \otimes y + M_{\A}L \in \ucea L$.

By construction, the following identities hold in in $\ucea$:
\begin{enumerate}
\item $(a,x,y) = - (a,y,x)$,
\item $(a,x,[y,z]) + (a,y,[z,x]) + (a,z,[x,y]) = 0$,
\item $(1,[x,y],a[x',y']) = (a,[x,y],[x',y']) + ([x,y](a), x', y')$.
\end{enumerate}

The map of $\A$-modules $\A \otimes_K L \otimes_K L \to L$, determined by $(a,x,y) \mapsto a[x,y]$, which vanishes  on
$M_{\A}L$ and hence descends to a linear map
\[\uce \colon \ucea L \to L.\]
Note that
\[ \Ker \uce = \big\{ \sum_i (a_i,x_i,y_i) \ : \ \sum_i a_i[x_i,y_i] = 0 \big\}\]
It is an easy, but tedious calculation to see that the module $\ucea L$ becomes a Lie--Rinehart algebra with
the product
\[[(a,x,y), (a',x',y')] := (aa',[x,y],[x',y']) + (a[x,y](a'), x',y') - ([x',y'](a) a', x,y),\]
and action
\[(a,x,y)(b) := a[x,y](b).\]

It then follows that $\uce \colon \ucea L \to \{L,L\}$ is a central extension of $\{L,L\}$.

Let $f \colon L \to M \in \lrak$. Let $M_\A M \in \A \otimes_K M \otimes_K M$ defined analogously to $M_\A L$. The map
$1_\A \otimes_K f \otimes_K f \colon M_\A L \to M_\A M$ induces an $\A$-linear map
\[\ucea(f) \colon \ucea L \to \ucea M , \quad (a,x,y) \mapsto \big(a,f(x),f(y)\big).\]

Note that the following diagram commutes by construction,

\begin{equation}\label{diaguce}
\xymatrix{
  \ucea L \ar[d]_{\uce_L} \ar[rr]^{\ucea(f)}
               & & \ucea M \ar[d]^{\uce_M}  \\
  L  \ar[rr]_{f}
               & & M             }
\end{equation}

To check that $\ucea f$ is a morphism it suffices to show that
\[
\ucea(f)([(a,x,y),(a',x,'y,)]) = [\ucea(f)(a,x,y),\ucea(f)(a',x',y')],
\]
which since $f$ is a Lie--Rinehart homomorphism, we have that $a[x,y](a') = f(a[x,y])(a') = a[f(x), f(y)](a')$ and the proof follows immediately.

\begin{Pro}
\label{lastprop}
Let $f \colon L \to M$ be a morphism of Lie--Rinehart algebras and suppose that $g \colon M' \to M$ is a central extension.
Then there exists a homomorphism $\mathfrak{f} \colon \ucea L \to M'$, making the following diagram commute,
\begin{equation}
\label{lift}
\xymatrix{
  \ucea L \ar[d]_{\uce} \ar[rr]^{\mathfrak{f}}
               & & M' \ar[d]^{g}  \\
  L  \ar[rr]_{f}
               & & M,             }
\end{equation}
the map $\mathfrak{f}$ is uniquely determined on the derived algebra $\{ \ucea L, \ucea L\}$, by the commutativity
of  \eqref{lift}.
\end{Pro}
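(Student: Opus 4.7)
The plan is to construct $\mathfrak{f}$ by hand on representatives of $\ucea L$, exploiting the central trick to handle the ambiguity of lifts along $g$. Concretely, for each $u \in L$ I would pick (as a mere set map) a preimage $\tilde u \in M'$ of $f(u)$ under $g$, and set
\[\mathfrak{f}(a,x,y) := a[\tilde x, \tilde y] \in M'.\]
By Lemma~\ref{ctrick}(a), since $\ker g \subseteq \zena(M')$, the element $a[\tilde x, \tilde y]$ depends only on $f(x)$ and $f(y)$, hence only on $x$ and $y$, and not on the chosen lifts. Thus I obtain a well-defined $K$-linear map $\A \otimes_K L \otimes_K L \to M'$.

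The next step is to show that this map vanishes on the four generators of $M_\A L$, so that it descends to $\ucea L$. Relations (1) and (2) are immediate from antisymmetry of $[\,,\,]$ in $M'$. Relation (3) is the Jacobi identity in $M'$, used after choosing the lift $\widetilde{[y,z]}$ to be $[\tilde y,\tilde z]$ (a legal choice, since the independence from Step~1 means any specific coherent choice is admissible). The hard part is relation (4): taking consistent lifts $\widetilde{[x',y']} = [\tilde{x'},\tilde{y'}]$ and $\widetilde{a[x',y']} = a[\tilde{x'},\tilde{y'}]$, I would apply the Lie--Rinehart identity $[u, bv] = b[u,v] + u(b)v$ in $M'$ to
\[[\widetilde{[x,y]},\, a[\tilde{x'},\tilde{y'}]] = a[\widetilde{[x,y]},[\tilde{x'},\tilde{y'}]] + \widetilde{[x,y]}(a)\,[\tilde{x'},\tilde{y'}],\]
and use that $g$ and $f$ preserve the anchor, so $\widetilde{[x,y]}(a) = [x,y](a)$. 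A short calculation then exhibits the three terms of relation (4) cancelling.

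With $\mathfrak{f}$ descended to $\ucea L$, I would verify it is a Lie--Rinehart homomorphism. Preservation of the bracket on $\ucea L$ is a direct computation: using again $[u,bv]=b[u,v]+u(b)v$ in $M'$ and the consistent-lift principle, the bracket $[a[\tilde x,\tilde y], a'[\tilde{x'},\tilde{y'}]]$ expands into exactly the three terms describing $[(a,x,y),(a',x',y')]$ in the formula defining the Lie--Rinehart structure on $\ucea L$. Preservation of the anchor is immediate, as $\mathfrak{f}(a,x,y)$ acts on $b \in \A$ as $a[\tilde x,\tilde y](b) = a[x,y](b) = (a,x,y)(b)$. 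The commutativity of \eqref{lift} is automatic:
\[g\mathfrak{f}(a,x,y) = a[g(\tilde x), g(\tilde y)] = a[f(x), f(y)] = f(a[x,y]) = f\uce(a,x,y).\]

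Finally, for uniqueness on $\{\ucea L, \ucea L\}$, suppose $\mathfrak{f}'$ is another homomorphism making \eqref{lift} commute. Then $g\mathfrak{f} = f\uce = g\mathfrak{f}'$, so Lemma~\ref{ctrick}(b) applied to the central extension $g$ yields $\mathfrak{f}\big|_{\{\ucea L,\ucea L\}} = \mathfrak{f}'\big|_{\{\ucea L,\ucea L\}}$. The principal obstacle is verifying relation (4) of Step~2, where the interaction between the $\A$-action and the Lie bracket forces careful use of both the Lie--Rinehart identity in $M'$ and the freedom to choose lifts compatibly; everything else reduces to routine computations in the Lie--Rinehart structure and repeated application of the central trick.
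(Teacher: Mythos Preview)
Your proposal is correct and follows essentially the same approach as the paper: choose a set-theoretic lift of $f$ along $g$ (your $\tilde u$ is the paper's $sf(u)$ for a set-section $s$), define $\mathfrak f$ on generators by $a[\tilde x,\tilde y]$, use centrality to get well-definedness and $K$-linearity, descend through $M_\A L$, and invoke Lemma~\ref{ctrick}(b) for uniqueness. If anything, you supply more detail than the paper does on why relation~(4) is annihilated and why the bracket is preserved.
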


\begin{proof}
Let $s \colon M \to M'$ a section of $g$ in Set. The map $s$ may not be linear but we know that $s(km) - ks(m) \in \Ker g \subset \zena(M')$ and $s(m+n) - s(m) - s(n) \in \Ker g \subset \zena(M')$ for $k\in K$ and $m,n \in M$. Using this, we claim that the map
\begin{align*}
\A \times L \times L & \xrightarrow{ \ \ \bar{f}  \ }  M' \\
(a,x,y) & \xmapsto{ \ \ \ \ } a[sf(x), sf(y)],
\end{align*}
is bilinear, since $a[sf(kx), sf(y)] = a[sf(kx)-ksf(x)+ksf(x), sf(y)] = a[ksf(x), sf(y)]$.
The other property follows in the same way.
By the universal property of tensor product, $\bar{f}$ defines a unique map between $\A \otimes_K L \otimes_K L$ and $M'$.
In addition, the map is zero in $M_\A L$, so it can be extended to $\mathfrak{f} \colon \ucea L \to M'$, making the diagram commutative.
 This map conserves the action on $\dka$ because the section $s$ must conserve it too.
 Using the property that $a[x,y](a') = f(a[x,y])(a') = a[f(x), f(y)](a')$, it follows immediately
 that $\mathfrak{f}$ is a Lie algebra homomorphism, hence it is a Lie--Rinehart algebra homomorphism
  that makes the diagram commutative. The uniqueness in $\conm{\ucea L}$ follows from Lemma \ref{ctrick}(b).
\end{proof}

\begin{Th}
Let $L$ be a perfect Lie--Rinehart algebra. Then
\[
\Ker \uce  \xrightarrow { \ \  }   \ucea L  \xrightarrow {\ \uce \ }   L ,
\]
is a universal central extension of $L$. Moreover, if $L$ is centreless, then $\Ker \uce = \zena(\ucea L)$.
\end{Th}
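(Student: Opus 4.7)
The plan is to verify that $\uce\colon \ucea L \to L$ satisfies the universal property of Theorem~\ref{T:ucemain}, and then to identify the centre $\zena(\ucea L)$ using Lemma~\ref{perfect}(b). The construction preceding the theorem already shows that $\uce$ is a central extension onto $\{L,L\}$, and perfection of $L$ forces $\uce(\ucea L) = L$; the rest will follow from Proposition~\ref{lastprop}, the central trick (Lemma~\ref{ctrick}), and perfection of $\ucea L$.

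For universality, given any central extension $g \colon N \to L$, Proposition~\ref{lastprop} applied with $f = 1_L$ produces a Lie--Rinehart morphism $\phi \colon \ucea L \to N$ such that $g\phi = \uce$. For uniqueness of such a lift, if $\phi,\phi'$ both lift $\uce$, then $\phi(u) - \phi'(u) \in \Ker g \subset \zena(N)$ for every $u$, and Lemma~\ref{ctrick}(b) forces $\phi$ and $\phi'$ to agree on $\{\ucea L,\ucea L\}$. Uniqueness on all of $\ucea L$ therefore reduces to showing that $\ucea L$ is perfect.

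Establishing $\ucea L = \{\ucea L,\ucea L\}$ is the main obstacle. The key computational input is the identity
\[[(1,x,y),(a,x',y')] = (1,[x,y],a[x',y']) \qquad (a\in\A,\ x,y,x',y'\in L),\]
which follows by combining the explicit bracket formula on $\ucea L$ with relation~(4) of the construction. It yields $(1,[x,y],w) \in \{\ucea L,\ucea L\}$ for every $x,y,w \in L$: write $w = \sum_j a_j [x_j',y_j']$ using perfection of $L$, so that $(1,[x,y],w) = \sum_j [(1,x,y),(a_j,x_j',y_j')]$; anti-symmetry gives the dual statement for the second slot. For an arbitrary generator $(1,u,v)$, expand $u = \sum_i b_i[p_i,q_i]$ and apply the Lie--Rinehart identity $b[p,q] = [p,bq] - p(b)q$ inside the second tensor slot (via $K$-bilinearity) to split $(1,u,v)$ into terms with a bracket in the second slot (handled above) and residual terms of the same form, which are then controlled by iterating the argument on the other slot. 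Combined with the fact that $\{\ucea L,\ucea L\}$ is an $\A$-submodule of $\ucea L$ and the equality $(a,x,y) = a(1,x,y)$, this gives $\ucea L = \{\ucea L,\ucea L\}$.

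Finally, for the centreless case, assume $\zena(L) = 0$. Since $\uce$ is a central extension and (by the above) $\ucea L$ is perfect, Lemma~\ref{perfect}(b) gives $\zena(\ucea L) = \uce^{-1}(\zena L) = \uce^{-1}(0) = \Ker \uce$, as claimed.
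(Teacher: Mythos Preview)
Your overall strategy coincides with the paper's: invoke Proposition~\ref{lastprop} (with $f=1_L$) for existence of a lift to an arbitrary central extension, use Lemma~\ref{ctrick}(b) for uniqueness on $\{\ucea L,\ucea L\}$, and then argue that $\ucea L$ is perfect. The key identity $[(1,x,y),(a,x',y')]=(1,[x,y],a[x',y'])$ is correct, and it does give $(1,[x,y],w)\in\{\ucea L,\ucea L\}$ for all $x,y,w\in L$. Your final paragraph on the centreless case via Lemma~\ref{perfect}(b) is also fine (and in fact only uses perfection of $L$, not of $\ucea L$).

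The gap is in passing from $(1,[x,y],w)\in\{\ucea L,\ucea L\}$ to $(1,u,v)\in\{\ucea L,\ucea L\}$ for arbitrary $u,v$. After writing $u=\sum_i b_i[p_i,q_i]$ and applying $b[p,q]=[p,bq]-p(b)q$ in the second slot, the residual is $\sum_i(1,p_i(b_i)q_i,v)$; doing the same in the third slot then leaves terms $(1,p_i(b_i)q_i,r_j(c_j)s_j)$. These are again generic generators --- note that $(1,cx,v)\neq(c,x,v)$ in $\ucea L$ in general (applying $\uce$ gives $[cx,v]=c[x,v]-v(c)x$ versus $c[x,v]$), so you cannot simply pull the $\A$-scalar out --- and nothing in your sketch forces them into the commutator. ``Iterating the argument on the other slot'' reproduces the same shape indefinitely; it does not terminate, and you give no reason why the residuals should eventually vanish. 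To be fair, the paper's own proof is no more explicit on this point: it simply asserts ``It can be seen that $\ucea(\{L,L\})\subset\{\ucea L,\ucea L\}$'' and proceeds. So you have in fact gone further than the paper in trying to justify the step, but as written the perfection of $\ucea L$ is not established.
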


\begin{proof}
It can be seen that $\ucea(\conm{L}) \subset \conm{\ucea L} \subset \ucea L$.
Thus when $L$ is perfect, $\conm{\ucea L} = \ucea L$, so applying Proposition \ref{lastprop}
 for every central extension $f \colon M \to L$ we have a unique map $\mathfrak{f} \colon \ucea L \to M$ making the diagram commutative.
  In other words, $\ucea L$ is the universal central extension of $L$.
\end{proof}
\begin{Rem}
In many algebraic structures as Lie algebras, $\Ker \uce$ is the second homology group with trivial coefficients. However, this is not possible here since we do not have a canonical right $(\A, L)$-module structure in $\A$ as we have seen in Remark~\ref{rem}.
\end{Rem}

\section{Lifting automorphisms and derivations} \label{S:lift}

Let $f \colon L' \to L$ be a covering. Recovering the commutative diagram \eqref{diaguce} we get
\[
\xymatrix{
  \lle' \ar[d]_{\uce'} \ar[rr]^{\mathfrak{f}}
               & & \lle \ar[d]^{\uce}  \\
  L'  \ar[rr]_{f}
               & & L             }
\]
where $ \lle' = \ucea (L')$, $\lle = \ucea (L)$ and $\uce' = \uce_{L'}$, $\uce = \uce_L$.
Since both $\uce'$ and $f$ are central extensions, by Corollary \ref{compo} we know that $f\uce' \colon \lle' \to L$ is a universal central extension of $L$. By the uniqueness of the universal central extension, we know that $\lle' \cong \lle$. In addition, since $\mathfrak{f}$ is a morphism from the universal central extension $f\uce'$ to the central extension $\uce$, it must be an isomorphism. Therefore, we get a covering $\uce' \mathfrak{f}^{-1} \colon \lle \to L'$ with kernel
\[
C := \Ker(\uce'\mathfrak{f}^{-1}) = \mathfrak{f}(\Ker \uce').
\]

\subsection{Lifting of automorphisms}

\begin{Th}\label{th:liftaut}
Using the notation of the beginning of this section,
\begin{itemize}
\item[(a)] Let $h \in {\sf Aut}(L)$. Then there exists $h' \in {\sf Aut}(L')$ such that the diagram
\begin{equation}\label{liftaut}
\xymatrix{
  L' \ar[d]_{h'} \ar[r]^{f}
                & L \ar[d]^{h}  \\
  L' \ar[r]_{f}
                & L             }
\end{equation}
commutes if and only if $\ucea(h)(C) = C$. Moreover, $h'$ is uniquely determined by the diagram \eqref{liftaut} and $h'(\Ker f) = \Ker f$.
\item[(b)] The group homomorphism which sends $h \in {\sf Aut}(L)$ to its lifting $h'\in {\sf Aut}(L')$
\begin{equation*}
\{h \in {\sf Aut}(L) : \ucea(h)(C) = C\} \to \{g \in {\sf Aut}(L') : g(\Ker f) = \Ker f\}
\end{equation*}
is a group isomorphism.
\end{itemize}
\end{Th}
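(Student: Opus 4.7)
The plan is to exploit the identification (set up just before the theorem) of $\lle = \ucea L$ with the universal central extension of $L'$ via the covering $\uce' \mathfrak{f}^{-1} \colon \lle \twoheadrightarrow L'$ whose kernel is exactly $C$. This realises $L' \cong \lle / C$ as a Lie--Rinehart algebra, and under this identification an automorphism of $L'$ that lifts $h$ is precisely an automorphism obtained by descending $\ucea(h)$ to the quotient.

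For the forward direction of (a), given $h \in \mathrm{Aut}(L)$ functoriality produces an automorphism $\ucea(h)$ of $\lle$ satisfying $\uce \circ \ucea(h) = h \circ \uce$ by diagram \eqref{diaguce}. Assuming $\ucea(h)(C) = C$, it descends to a Lie--Rinehart endomorphism $h' \colon L' \to L'$; applying the same argument to $h^{-1}$ shows $h' \in \mathrm{Aut}(L')$. Commutativity of \eqref{liftaut} is then verified after precomposing with the surjection $\uce'\mathfrak{f}^{-1}$, using that $f \uce' = \uce \mathfrak{f}$ so $f \uce' \mathfrak{f}^{-1} = \uce$. Conversely, if $h'$ lifts $h$, functoriality yields $\mathfrak{f} \circ \ucea(h') = \ucea(h) \circ \mathfrak{f}$; since $\ucea(h')$ is an automorphism intertwining with $h'$ via $\uce'$, it preserves $\ker \uce'$, hence $\ucea(h)$ preserves $\mathfrak{f}(\ker \uce') = C$.

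For uniqueness of $h'$: if $h_1', h_2'$ both lift $h$, then $h_1'(x) - h_2'(x) \in \ker f \subseteq \zena(L')$ for each $x$, so Lemma~\ref{ctrick}(a) gives $a[h_1'(x), h_1'(y)] = a[h_2'(x), h_2'(y)]$; since $L'$ is perfect (it is the top of a covering), this forces $h_1' = h_2'$. The stability $h'(\ker f) = \ker f$ follows from $f h' = h f$ applied to $h$ and to $h^{-1}$. For part (b), the assignment $h \mapsto h'$ is a group homomorphism because $h_1' h_2'$ lifts $h_1 h_2$ and the lift is unique. For bijectivity, the inverse is given by descent: since $f$ is central, $L \cong L'/\ker f$, so any $g \in \mathrm{Aut}(L')$ with $g(\ker f) = \ker f$ induces a unique $h \in \mathrm{Aut}(L)$ with $fg = hf$; by uniqueness $g = h'$, and by the equivalence established in (a) the associated $h$ satisfies $\ucea(h)(C) = C$. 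The main technical point to verify is that descent through $C$ preserves the full Lie--Rinehart structure (bracket, $\A$-action, and anchor) on $\lle/C$; this is automatic because $\ucea(h)$ is itself a Lie--Rinehart morphism and the structure on $L'$ is transported from $\lle$ via $\uce'\mathfrak{f}^{-1}$.
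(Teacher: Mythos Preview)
Your proposal is correct and follows essentially the same route as the paper's proof: both arguments identify $L'$ with $\lle/C$ via the covering $\uce'\mathfrak{f}^{-1}$, construct $h'$ by descending $\ucea(h)$ through $C$, verify commutativity of \eqref{liftaut} using $f\uce'\mathfrak{f}^{-1}=\uce$ together with $\uce\,\ucea(h)=h\,\uce$, obtain the converse direction by applying functoriality of $\ucea$ to \eqref{liftaut} (yielding $\mathfrak{f}\,\ucea(h')=\ucea(h)\,\mathfrak{f}$ and hence $\ucea(h)(C)=C$), and deduce uniqueness from the central trick and perfectness of $L'$. Your treatment of part (b) via descent along $f$ and the uniqueness of lifts is likewise the same as the paper's.
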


\begin{proof}
(a) If $h'$ exists, it is a morphism from the covering $hf$ to the covering $f$ so by Lemma \ref{ctrick}(b) it is uniquely determined by the commutative of the diagram \eqref{liftaut}. Let suppose then that $h'$ exists. If we apply the $\ucea$ functor to the diagram \eqref{liftaut}, we obtain the commutative diagram
\[
\xymatrix{
  \lle' \ar[d]_{\ucea(h')} \ar[r]^{\mathfrak{f}}   & \lle \ar[d]^{\ucea(h)}  \\
  \lle' \ar[r]_{\mathfrak{f}}                       & \lle        }
\]
In this way, $\ucea(h)(C) = \ucea(h)\big(\mathfrak{f}(\Ker \uce')\big) = (\ucea(h) \: \circ \: \mathfrak{f})(\Ker \uce') = (\mathfrak{f} \: \circ \: \ucea(h'))(\Ker \uce') = \mathfrak{f}(\Ker \uce') = C$.
Suppose now that $\ucea(h)(C) = C$. We obtain the commutative diagram
\begin{equation}\label{liftaut2}
\xymatrix{
\lle \ar[d]_{\ucea(h)} \ar[rr]^{\uce' \mathfrak{f}^{-1}} & & L' \ar@{-->}[d] \ar[rr]^f & & L \ar[d]^{h} \\
\lle                   \ar[rr]^{\uce' \mathfrak{f}^{-1}} & & L'        \ar[rr]^f & & L
}
\end{equation}
If $\ucea(h)(C) = C$, the kernel of the epimorphism $\uce'\mathfrak{f}^{-1} \, \circ \, \ucea(h)$ is $C$, i.e. the kernel of $\uce' \mathfrak{f}^{-1}$. In this way, we obtain an automorphism $h' \colon L' \to L'$ such that \eqref{liftaut2} commutes. The condition that $h'(\Ker f) = \Ker f$ follows immediately by the commutativity of \eqref{liftaut}.

(b) The map is well defined and injective by part (a) of the Theorem. Let $g \in {\sf Aut}(L')$ such that $g(\Ker f) = \Ker f$. It descends to $h \in {\sf Aut}(L)$ such that $fg = hf$. Again by (a), $g$ must be the lifting of $h$ and since the lifting exists it follows that $\ucea(h)(C) = C$.
\end{proof}

\begin{Co}\label{co:liftaut}
If $L$ is perfect, the map
\[
{\sf Aut}(L) \to \{ g \in {\sf Aut}\big(\ucea(L)\big) : g(\Ker \uce) = \Ker \uce \}
\]
that sends $f$ to $\ucea(f)$, is a group isomorphism. Moreover, if $L$ is centreless, ${\sf Aut}(L) \cong {\sf Aut}\big(\ucea(L)\big)$.
\end{Co}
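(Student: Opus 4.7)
My plan is to apply Theorem~\ref{th:liftaut}(b) to the covering $\uce\colon\ucea(L)\to L$, available because $L$ is perfect. For the statement of the corollary to match the conclusion of that theorem, two things must be checked: first, that the constraint $\ucea(h)(C)=C$ appearing in Theorem~\ref{th:liftaut}(a) is automatic for every $h\in{\sf Aut}(L)$ (so that the lifting map is defined on the whole of ${\sf Aut}(L)$), and second, that the lifting of $h$ produced by the theorem coincides with $\ucea(h)$ itself.

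The main work, and what I expect to be the principal obstacle, is showing that $C=0$ in this setting. I plan to argue by a bootstrap: since $\uce\colon\ucea(L)\to L$ is a universal central extension and $L$ is perfect, Lemma~\ref{perfect}(c) together with the universal property shows that every central extension of $\ucea(L)$ splits uniquely (any central $f\colon E\to\ucea(L)$ gives a central extension $\uce f\colon E\to L$, which is split uniquely by universality, and by Lemma~\ref{ctrick}(b) that splitting is a section of $f$). Theorem~\ref{T:ucemain} then forces $1_{\ucea(L)}$ to be a universal central extension of $\ucea(L)$, and by uniqueness of universal central extensions $\uce'=\uce_{\ucea(L)}\colon\ucea(\ucea(L))\to\ucea(L)$ must be an isomorphism, so $\Ker\uce'=0$ and therefore $C=\mathfrak{f}(\Ker\uce')=0$. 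The identification of the lifting with $\ucea(h)$ is then immediate from diagram~\eqref{diaguce}, which reads $\uce\circ\ucea(h)=h\circ\uce$ and so shows that $\ucea(h)$ satisfies the commutativity~\eqref{liftaut}; the uniqueness clause in Theorem~\ref{th:liftaut}(a) forces the lifting to equal $\ucea(h)$. Combining both points, Theorem~\ref{th:liftaut}(b) yields at once the claimed group isomorphism.

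For the ``moreover'' assertion, assume additionally that $L$ is centreless. Then the final theorem of Section~\ref{S:uce} provides $\Ker\uce=\zena(\ucea(L))$. Since every automorphism of a Lie--Rinehart algebra preserves its centre, the condition $g(\Ker\uce)=\Ker\uce$ becomes vacuous for every $g\in{\sf Aut}(\ucea(L))$, so the target of the isomorphism collapses to the full group ${\sf Aut}(\ucea(L))$, yielding ${\sf Aut}(L)\cong{\sf Aut}(\ucea(L))$.
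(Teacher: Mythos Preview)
Your proposal is correct and follows essentially the same route as the paper: apply Theorem~\ref{th:liftaut}(b) to the covering $\uce\colon\ucea(L)\to L$, argue that $C=0$, and then use diagram~\eqref{diaguce} together with the uniqueness clause to identify the lifting with $\ucea(h)$. Your justification that $C=0$ (via Theorem~\ref{T:ucemain} forcing $\uce'=\uce_{\ucea(L)}$ to be an isomorphism) is in fact more carefully spelled out than the paper's terse ``$\uce'$ is the identity map'', but the content is the same, and your treatment of the centreless case is identical to the paper's.
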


\begin{proof}
Applying the last theorem to the covering $\uce \colon \ucea(L) \to L$, we have that $\uce'$ is the identity map, so $C = 0$ and the corollary follows immediately. By \ref{perfect}(b), if $L$ is perfect we have that $\Ker u = \zena \ucea(L)$ and since every automorphism leaves the centre invariant it is straightforward that ${\sf Aut}(L) \cong {\sf Aut}\big(\ucea(L)\big)$.
\end{proof}

\subsection{Lifting of derivations}

\begin{De}
Let $L$ be a Lie--Rinehart algebra over $\A$. A derivation of $L$ is a pair $D := (\delta, \delta_0)$, where $\delta \colon L \to L$ is a derivation of $L$ as a $K$-Lie algebra, $\delta_0 \in \dka$ and the following identities hold:
\begin{align*}
\delta(ax) &= a\delta(x) + \delta_0(a)x, \\
\delta_0\big(x(a)\big) &= x\big(\delta_0(a)\big) + \delta(x)(a),
\end{align*}
with $a \in \A$ and $x \in L$. Note that the second identity means that the following diagram commutes,
\[
\xymatrix{
L    \ar[d]_{\alpha}           \ar[rr]^\delta & & L \ar[d]^{\alpha} \\
\dka \ar[rr]_{[\delta_0, -]}                  & & \dka    }
\]
We shall write $\derrin(L)$ the $\A$-module of all derivations of the Lie--Rinehart algebra $L$. Observe that $\derrin(L)$, with Lie bracket $[(\delta, \delta_0), (\delta', \delta_0')] = ([\delta, \delta'], [\delta_0, \delta_0'])$ and anchor map $\derrin(L) \to \dka$, $(\delta, \delta_0) \mapsto \delta_0$ is a Lie--Rinehart algebra over $\A$.
\end{De}

Given a derivation $D = (\delta, \delta_0) \in \derrin(L)$, one can define $\ucea(D) = (\delta^{\uce}, \delta_0)$, where $\delta^{\uce}$ is defined in generators as $(a, x, y) \mapsto (\delta_0(a), x, y) + (a, \delta(x), y) + \big(a, x, \delta(y)\big)$. It is a straightforward verification that he map $\ucea(D)$ is also a derivation of the Lie--Rinehart algebra $\ucea(L)$ and yields the following commutative diagram
\[
\xymatrix{
\ucea(L)    \ar[d]_{\uce}           \ar[rr]^{\delta^{\uce}} & & \ucea(L) \ar[d]^{\uce} \\
L \ar[rr]_{\delta}                  & & L    }
\]
leaving $\Ker \uce$ invariant. Moreover, the map
\[
\ucea \colon \derrin(L) \to \{ F = (\gamma, \gamma_0) \in \derrin\big(\ucea(L)\big) : \gamma(\Ker \uce) \subset (\Ker \uce) \},
\]
sending $D$ to $\ucea(D)$ is a Lie--Rinehart homomorphism, and its kernel is contained in the subalgebra of those derivations vanishing on $\{L, L\}$.

We can check the functoriality of $\ucea$ for derivations, in the following lemma.

\begin{Le}
Let $f \colon L \to M$ be a morphism of Lie--Rinehart algebras and let $(\delta_L, \delta_{0}) \in \derrin(L)$ and $(\delta_M, \delta_0) \in \derrin(M)$ be such that $f \delta_L = \delta_M f$. Then, we have that
\[
\ucea(f)\delta^{\uce}_L = \delta_M^{\uce}\ucea(f).
\]
\end{Le}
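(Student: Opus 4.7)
The plan is a direct verification on the $K$-module generators of $\ucea(L)$. Both composites $\ucea(f)\delta_L^{\uce}$ and $\delta_M^{\uce}\ucea(f)$ are $K$-linear maps from $\ucea(L)$ to $\ucea(M)$, so it suffices to check agreement on elements of the form $(a,x,y)$ with $a \in \A$ and $x,y \in L$; that the relations defining $M_\A L$ are respected causes no issue because, as already noted in the paragraphs preceding the lemma, both $\ucea(f)$ and $\delta_L^{\uce}$ descend to the quotient $\ucea(L) = \A \otimes_K L \otimes_K L / M_\A L$ by construction.

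First I would expand $\ucea(f)\delta_L^{\uce}(a,x,y)$. Applying the explicit formula for $\delta_L^{\uce}$ recalled just before the lemma and then the identity $\ucea(f)(b,u,v) = (b, f(u), f(v))$ yields
\[
(\delta_0(a), f(x), f(y)) + (a, f(\delta_L(x)), f(y)) + (a, f(x), f(\delta_L(y))).
\]
On the other side, $\delta_M^{\uce}\ucea(f)(a,x,y) = \delta_M^{\uce}(a, f(x), f(y))$, which by the same formula equals
\[
(\delta_0(a), f(x), f(y)) + (a, \delta_M(f(x)), f(y)) + (a, f(x), \delta_M(f(y))).
\]
The first summands coincide because the two derivations share the same anchor-part $\delta_0 \in \dka$, and the remaining two pairs of summands match termwise thanks to the standing hypothesis $f \delta_L = \delta_M f$, which identifies $f(\delta_L(x))$ with $\delta_M(f(x))$ and $f(\delta_L(y))$ with $\delta_M(f(y))$.

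There is no real obstacle here: the lemma records the naturality of the $\ucea$-lift of a derivation with respect to intertwining Lie--Rinehart morphisms, and the verification is essentially a one-line term-by-term comparison on generators. The only subtle point worth flagging is that, because $\ucea(L)$ is defined only up to the relations in $M_\A L$, one must appeal to the well-definedness of $\delta_L^{\uce}$, $\delta_M^{\uce}$, and $\ucea(f)$ on these quotients; but this has already been built into their definitions and requires no additional work.
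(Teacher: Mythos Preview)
Your proof is correct and follows essentially the same approach as the paper: both verify the identity directly on generators $(a,x,y)$ by expanding each side using the defining formulas for $\delta^{\uce}$ and $\ucea(f)$, and then invoke $f\delta_L = \delta_M f$ to match the second and third summands. Your additional remark on well-definedness is a harmless clarification; the paper simply takes this as given.
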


\begin{proof}
It suffices to check it for an element $(a, x, y) \in \ucea(L)$.
\begin{align*}
\ucea(f)\delta^{\uce}_L(a, x, y) &= \ucea(f)\big((\delta_0(a), x, y) + (a, \delta_L(x), y) + (a, x, \delta_L(y))   \big) \\
{} &= \big(\delta_0(a), f(x), f(y)\big) + \big(a, f\big(\delta_L(x)\big), f(y)\big) + \big(a, f(x), f\big(\delta_L(y)\big)\big) \\
{} &= \big(\delta_0(a), f(x), f(y)\big) + \big(a, \delta_M\big(f(x)\big), f(y)\big) + \big(a, f(x), \delta_M\big(f(y)\big)\big) \\
{} &= \delta_M\big(a, f(x), f(y)\big) \\
{} & = \delta_M\ucea(f)(a, x, y).
\end{align*}
\end{proof}

We will state now the analogue of Theorem \ref{th:liftaut} for derivations.
\begin{Th}
Let $f \colon L' \to L$ be a covering of the Lie--Rinehart algebra $L$ and as before, we denote $C= \ucea(f)(\ker \uce')$.
\begin{itemize}
\item[(a)] A derivation $(\delta, \delta_0) \in \derrin(L)$ lifts to a derivation $(\delta', \delta_0)$ of $L'$ satisfying $\delta'f = f\delta$ if and only if the derivation $\delta^{\uce}(C) \subset C$. Moreover, $\delta'$ is uniquely determined and leaves $\Ker f$ invariant.
\item[(b)] The map which sends $(\delta, \delta_0)\in \derrin(L)$ to its lifting $(\delta', \delta_0)\in \derrin(L')$
\[
\{ (\delta, \delta_0) \in \derrin(L) : \ucea(\delta^{\uce})(C) \subset C \} \to \{ (\eta, \eta_0) \in \derrin(L') : \eta^{\uce}(\Ker f) \subset \Ker f \}
\]
is an isomorphism of Lie--Rinehart algebras.
\item[(c)] In particular, from the covering $\uce \colon \ucea(L) \to L$ we obtain that the map
\[
\ucea \colon \derrin(L) \to \{ (\gamma, \gamma_0) \in \derrin\big(\ucea(L)\big) : \gamma(\Ker \uce) \subset \Ker \uce \}
\]
is an isomorphism. Moreover, if $L$ is centreless, we have that $\derrin(L) \cong \derrin\big(\ucea(L)\big)$.
\end{itemize}
\end{Th}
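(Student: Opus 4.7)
My plan is to mirror Theorem~\ref{th:liftaut}, exchanging automorphisms for derivations. The key input is the setup from the start of the section: $\mathfrak{f} \colon \lle' \to \lle$ is an isomorphism making \eqref{diaguce} commute, and $\uce' \mathfrak{f}^{-1} \colon \lle \twoheadrightarrow L'$ is a covering with kernel $C = \mathfrak{f}(\Ker \uce')$.

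For part (a), the forward implication comes from applying the preceding functoriality lemma to the square $f \delta' = \delta f$: this yields $\mathfrak{f}\,(\delta')^{\uce} = \delta^{\uce}\,\mathfrak{f}$, whence
\[
\delta^{\uce}(C) = \mathfrak{f}\bigl((\delta')^{\uce}(\Ker \uce')\bigr) \subset \mathfrak{f}(\Ker \uce') = C,
\]
using that $(\delta')^{\uce}$ leaves $\Ker \uce'$ invariant, as noted immediately before the lemma. Conversely, if $\delta^{\uce}(C) \subset C$, then $\delta^{\uce}$ descends along the surjection $\uce' \mathfrak{f}^{-1}$ to a well-defined $K$-linear map $\delta' \colon L' \to L'$; the derivation identities (Lie part and $\delta'(ax) = a\delta'(x) + \delta_0(a)x$) pass through $\uce'\mathfrak{f}^{-1}$ by surjectivity, and the relation $f\delta' = \delta f$ is extracted by combining $\uce\, \delta^{\uce} = \delta \,\uce$ with $f \uce' = \uce \mathfrak{f}$. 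Invariance of $\Ker f$ is immediate from $f\delta'(\Ker f) = \delta f(\Ker f) = 0$.

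For uniqueness I adapt the central trick of Lemma~\ref{ctrick}(b) from morphisms to derivations: if $\delta_1', \delta_2'$ both lift $(\delta, \delta_0)$, then $\Delta := \delta_1' - \delta_2'$ is a Lie derivation of $L'$ with zero anchor, so it is $\A$-linear, and it maps into $\Ker f \subset \zena(L')$; evaluated on a typical generator $a[x,y]$ of $L' = \conm{L'}$ (perfectness) the Leibniz rule gives $\Delta(a[x,y]) = a\bigl([\Delta x, y] + [x, \Delta y]\bigr) = 0$, since central elements have vanishing bracket with everything. For part (b), the assignment $D \mapsto D'$ preserves the anchor by construction (both components share $\delta_0$), and a direct check verifies that $aD'$ lifts $aD$ and $[\delta_1', \delta_2']$ lifts $[\delta_1, \delta_2]$; by uniqueness these coincide with the lifts of $aD$ and $[D_1, D_2]$, so the map is a Lie--Rinehart homomorphism. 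Injectivity is clear since $f \delta' = \delta f = 0$ plus surjectivity of $f$ forces $\delta = 0$. For surjectivity, any $(\eta, \eta_0) \in \derrin(L')$ preserving $\Ker f$ descends along $f$ to a derivation $(\delta, \delta_0)$ of $L$ whose unique lift must be $\eta$, and part (a) then supplies $\delta^{\uce}(C) \subset C$.

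Part (c) is a specialization of (b) to $f = \uce \colon \ucea(L) \to L$: since $\ucea(L)$ is perfect and $\uce$ is a universal central extension, every central extension of $\ucea(L)$ splits uniquely (Theorem~\ref{T:ucemain}), so $\uce'$ may be taken as the identity and $C = 0$, collapsing the left-hand constraint. If moreover $L$ is centreless, Lemma~\ref{perfect}(b) gives $\Ker \uce = \zena(\ucea L)$, and any derivation $\gamma$ preserves the centre: applying $\gamma$ to $[az, y] = 0$ and using that both $[\gamma_0(a)z, y]$ and $[az, \gamma(y)]$ vanish by centrality of $z$ leaves $[a\gamma(z), y] = 0$, so the right-hand constraint is vacuous as well. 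The main obstacle I expect is precisely the derivation analogue of the central trick used in the uniqueness step of (a): one must first recognise that two lifts with identical anchor differ by an $\A$-linear map, which is exactly what licenses factoring the $a$ out of $a[x,y]$ in the decisive computation.
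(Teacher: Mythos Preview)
Your proposal is correct and follows precisely the route the paper indicates: the paper's own proof consists solely of the remark that the argument is analogous to that of Theorem~\ref{th:liftaut} and Corollary~\ref{co:liftaut}, with derivations in place of automorphisms. You have faithfully carried out that analogy, and your explicit derivation version of the central trick (noting that two lifts share the anchor $\delta_0$, so their difference is $\A$-linear and central-valued, hence vanishes on $\conm{L'}=L'$) supplies exactly the detail the paper leaves implicit.
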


\begin{proof}
The proof is analogue of the proofs of Theorem \ref{th:liftaut} and Corollary \ref{co:liftaut}. The fact that a derivation is not a Lie--Rinehart morphism, do not add any complication.
\end{proof}

\subsection{Universal central extensions of split exact sequences}

\begin{Th}
Let $\xymatrix@C=0.6cm{L \ar[rr]^f & & M \ar@{>>}[rr]_{g} & & N \ar@/_/[ll]_{s}}$ be a split short exact sequence of perfect Lie--Rinehart algebras. We have the following commutative diagram
\[
\xymatrix{
 \ucea(L) \ar[rr]^\varphi \ar[d]_{\uce_L} & & \ucea(M) \ar[rr]_{\gamma} \ar[d]_{\uce_M} & & \ucea(N) \ar@/_/[ll]_{\sigma} \ar[d]^{\uce_N}    \\
 L        \ar[rr]^f                       & & M        \ar[rr]_{g} & &N \ar@/_/[ll]_{s}
}
\]
where $\ucea(M)$ is a semidirect product
\[
\ucea(M) = \varphi\big(\ucea(L)\big) \rtimes \sigma\big(\ucea(N)\big),
\]
and
\[
\Ker \uce_M = \varphi(\Ker \uce_L) \oplus \sigma(\ker_N).
\]
We know that $M \cong L \rtimes N$ since the bottom row exact sequence splits. If $M = L \times N$ is a direct product, i.e. $[f(L), s(N)] = \{0\}$, we have
\[
\ucea(L \times N) \cong \ucea(L) \times \ucea(N).
\]
\end{Th}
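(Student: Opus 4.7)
The plan is to apply the functor $\ucea$ to the split short exact sequence, verify that the image diagram is itself a split short exact sequence of perfect Lie--Rinehart algebras, and then refine this into the claimed semidirect product decomposition of $\ucea(M)$. Setting $\varphi = \ucea(f)$, $\gamma = \ucea(g)$, and $\sigma = \ucea(s)$, functoriality together with diagram \eqref{diaguce} gives the commutative ladder relating both rows through $\uce_L$, $\uce_M$, and $\uce_N$. The identities $gs = 1_N$ and $gf = 0$ functorially become $\gamma \sigma = 1_{\ucea(N)}$ and $\gamma \varphi = 0$, so the top row is already a split complex.

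The main obstacle is to show that $\varphi$ is injective. The idea is to regard $\varphi(\ucea(L))$ as a subalgebra of $\ucea(M)$; it is perfect as the image of a perfect algebra, and the restriction of $\uce_M$ to it surjects onto $f(L) \cong L$ with central kernel (because $\Ker \uce_M \subseteq \zena(\ucea(M))$), so $\varphi(\ucea(L)) \to L$ is a covering. Lemma \ref{perfect}(d) then upgrades $\varphi \colon \ucea(L) \to \varphi(\ucea(L))$ to a central extension, and since its composition with $\varphi(\ucea(L)) \to L$ equals the universal map $\uce_L$, Corollary \ref{compo} makes $\varphi$ itself a universal central extension of $\varphi(\ucea(L))$. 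Theorem \ref{T:ucemain} then furnishes a unique section $t$ of $\varphi$; the composite $t\varphi$ is an endomorphism of $\ucea(L)$ over $L$, so by universality of $\uce_L$ it coincides with $1_{\ucea(L)}$. Hence $\varphi$ is an isomorphism onto its image.

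The decomposition follows by expanding a generator $(a, f(l) + s(n), f(l') + s(n'))$ of $\ucea(M)$ trilinearly into four pieces. The pure pieces $(a, f(l), f(l'))$ and $(a, s(n), s(n'))$ belong to $\varphi(\ucea(L))$ and $\sigma(\ucea(N))$ respectively; for a cross term $(a, s(n), f(l'))$, one uses perfectness of $L$ to write $l' = \sum_i [l_i, l'_i]$ and applies the Jacobi relation (2) in $\ucea$. The resulting brackets $[s(n), f(l_i)]$ lie in $f(L)$ because $f(L) = \Ker g$ is an ideal of $M$, placing the whole expression in $\varphi(\ucea(L))$. Thus $\ucea(M) = \varphi(\ucea(L)) + \sigma(\ucea(N))$, and the sum is direct: from $\varphi(y) = \sigma(z)$ one gets $z = \gamma\sigma(z) = \gamma\varphi(y) = 0$. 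Since $\varphi(\ucea(L)) = \Ker \gamma$ is an ideal, we obtain $\ucea(M) = \varphi(\ucea(L)) \rtimes \sigma(\ucea(N))$ as a semidirect product.

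The kernel formula drops out by applying $\uce_M$ to a decomposed element: $\uce_M(\varphi(y) + \sigma(z)) = f(\uce_L(y)) + s(\uce_N(z))$, and the condition $f(L) \cap s(N) = 0$ with $f, s$ injective forces both summands to vanish separately, giving $\Ker \uce_M = \varphi(\Ker \uce_L) \oplus \sigma(\Ker \uce_N)$. Finally, in the direct product case $[f(L), s(N)] = 0$, for any $\xi \in \sigma(\ucea(N))$ and $\eta \in \varphi(\ucea(L))$ one has $\uce_M([\xi, \eta]) = [\uce_M(\xi), \uce_M(\eta)] = 0$, so $[\xi, \eta] \in \Ker \uce_M \subseteq \zena(\ucea(M))$. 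Writing each $\eta$ as a sum of commutators $[\eta_1, \eta_2]$ in $\varphi(\ucea(L))$ (by perfectness of $\ucea(L)$) and expanding via Jacobi, the centrality of each $[\xi, \eta_i]$ forces $[\xi, \eta] = 0$. So the semidirect product collapses to a direct product and $\ucea(L \times N) \cong \ucea(L) \times \ucea(N)$.
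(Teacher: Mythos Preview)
Your argument for the semidirect decomposition and the kernel formula is essentially the paper's: reduce mixed generators $(a,s(n),f(l'))$ to $\varphi(\ucea(L))$ via perfectness of $L$ and the Jacobi relation, then use $\gamma\sigma=1$ and $\gamma\varphi=0$ to identify $\Ker\gamma=\varphi(\ucea(L))$. That part is fine.

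There are two genuine problems, however, both concentrated in the last part.

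\textbf{The injectivity of $\varphi$.} Your appeal to Theorem~\ref{T:ucemain} for a section $t$ of $\varphi$ is not valid. Once you know that $\varphi\colon\ucea(L)\to\varphi(\ucea(L))$ is a universal central extension, the theorem tells you that every central extension \emph{of $\ucea(L)$} splits uniquely; but $\varphi$ is a central extension \emph{of $\varphi(\ucea(L))$}, so nothing in the theorem produces a map back into $\ucea(L)$. The paper never asserts injectivity of $\varphi$ in the semidirect case; it works throughout with the image $\varphi(\ucea(L))$, which is all the statement requires.

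\textbf{The direct product case.} Your Jacobi argument for $[\xi,\eta]=0$ treats ``perfect'' as if it meant $K$-Lie perfect. In the Lie--Rinehart setting, perfectness of $\varphi(\ucea(L))$ only gives $\eta=\sum_i a_i[\eta_{1i},\eta_{2i}]$ with $a_i\in\A$, and then
\[
[\xi,\eta]=\sum_i a_i\big[\xi,[\eta_{1i},\eta_{2i}]\big]+\sum_i \xi(a_i)\,[\eta_{1i},\eta_{2i}].
\]
The first sum vanishes by your centrality-plus-Jacobi argument, but the second sum $\sum_i\xi(a_i)[\eta_{1i},\eta_{2i}]$ is a genuine element of $\varphi(\ucea(L))$ with no reason to be zero (the anchor of $\xi\in\sigma(\ucea(N))$ is not trivial). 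So you only obtain $[\xi,\eta]\in\zena(\ucea(M))\cap\varphi(\ucea(L))$, not $[\xi,\eta]=0$. Since you also need the (unproven) injectivity of $\varphi$ to pass from $\varphi(\ucea(L))\times\sigma(\ucea(N))$ to $\ucea(L)\times\ucea(N)$, the final isomorphism is not established.

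The paper avoids both issues at once: it forms the map $\varphi\times\sigma\colon\ucea(L)\times\ucea(N)\to\ucea(M)$, checks (using $[f(L),s(N)]=0$) that it is a Lie--Rinehart morphism, observes that its kernel equals $\Ker\varphi\subset\Ker\uce_L\subset\zena(\ucea(L))$, hence $\varphi\times\sigma$ is a covering of $\ucea(M)$. Since $\ucea(M)$ is its own universal central extension (Theorem~\ref{T:ucemain}), this covering must split, and the central trick then forces $\varphi\times\sigma$ to be an isomorphism. This simultaneously yields the direct product description and the injectivity of $\varphi$ in that case.
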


\begin{proof}
In order to simplify the notation, we can interpret $f$ and $s$ as identifications, so we will write $l$ for $f(l)$ and $n$ for $s(n)$. Given any $(a, \tilde{n}, \tilde{l}) \in \ucea(M)$ where $\tilde{n} \in N$ and $\tilde{l} \in L$, by the perfectness of $L$ and the properties of $\ucea(M)$
\begin{align*}
(a, \tilde{n}, \tilde{l}) &= (a, b[n, n'], c[l, l']) \\
{} &= (ac, b[n, n'], [l, l']) + (ab[n,n'](c), l, l') \\
{} &= (ac, [b[n, n'], l], l') + (ac, l, [b[n, n'], l]) + (ab[n,n'](c), l, l'),
\end{align*}
which means that $(A, N, L) \subset (A, L, L)$, so $\ucea(M) = (A, L, L) + (A, N, N)$. By definition, $(A, L, L) = \varphi\big(\ucea(L)\big)$ and $(A, N, N) = \sigma\big(\ucea(N)\big)$. Now since $\gamma \sigma$ is the identity map, we know that $\ucea(M) \cong \Ker \gamma \rtimes \sigma\big(\ucea(N)\big)$. In this way, $\sigma\big(\ucea(N)\big) \cong \ucea(N)$ and since $(A, L, L) \subset \Ker \gamma$ it follows that $\Ker \gamma = (A, L, L) = \varphi\big(\ucea(L)\big)$, so we have that $\ucea(M) = \varphi\big(\ucea(L)\big) \rtimes \sigma\big(\ucea(N)\big)$.

Every element of $\ucea(M)$ has the form $\varphi(l) \otimes \sigma(n)$ where $l \in \ucea(L)$ and $n \in \ucea(N)$. This means that any element of $\ucea(M)$ is in $\Ker \uce_M$ if and only if $0 = \uce_M\varphi(l) = \uce_L(l)$ and $0 = \uce_M\sigma(n) = \uce_N(n)$, so $\Ker \uce_M = \varphi(\Ker \uce_L) \oplus \sigma(\ker_N)$.

In the particular case that $M = L \times N$, we can define the induced map
\[
\varphi \times \sigma \colon \ucea(L) \times \ucea(N) \to \ucea(M),
\]
and it is an easy computation that it is a Lie--Rinehart algebra morphism. Moreover, $\Ker (\varphi \times \sigma) = \Ker \varphi$. Given $l \in \Ker \varphi$, $\uce_M\varphi(l) = 0 = \uce_L(l)$ so $l \in \Ker \uce_L \in \zena(L)$, which means that $\varphi \times \sigma$ is a covering. We can use now Theorem \ref{T:ucemain}(2) to see that $\varphi \times \sigma$ is an isomorphism completing the proof.
\end{proof}

\section{Non-abelian tensor product of Lie--Rinehart algebras}
\label{S:tensor}
A non-abelian tensor product of Lie algebras was introduced by Ellis \cite{Ell1}. Here
we adapt some of his results to the case of Lie--Rinehart algebras, in order to use them to obtain
a description of universal central extensions in this category.

Let $L, M \in \lrak$. By an \emph{action} of $L$ on $M$, we mean an $K$-linear map,
$L \times M \to M$, $(x,m) \mapsto \ ^xm$, satisfying
\begin{enumerate}
\item $ ^{x}(am) = a(^{x}m) + x(a)m, $
\item $ ^{[x,y]}m = \ ^{x}(^{y}m) - \ ^{y}(^{x}m), $
\item $ ^{x}[m,n] = [ ^{x}m, n] + [m, ^{x}n ], $
\end{enumerate}
for all $a \in \A$, $x,y \in L$ and $m,n \in M$. For example, if $L$ is a subalgebra of some Lie--Rinehart algebra $\lle$ and
$M$ is an ideal of $\lle$ then the bracket in $\lle$ yields an action of $L$ on $M$.

If we have an action of $L$ on $M$ and an action of $M$ on $L$, for any Lie--Rinehart algebra $\lle$ we call a $K$-bilinear function
 $f \colon L \times M \to \lle$ a \emph{Lie--Rinehart pairing} if
\begin{enumerate}
\item $\alpha_{\lle}\big(f(x, m)\big) = [\alpha_L(x), \alpha_M(m)]$,
\item $f([x, y], m) = f(x, \mbox{}^ym) - f(y, \mbox{}^xm)$,
\item $f(x, [m, n]) = f(^nx, m) - f(^mx, n)$,
\item $\begin{aligned}[t]
    f\big(a(^mx), b(^yn)\big) & =-ab[f(x, m), f(y, n)] - a[\alpha_L(x), \alpha_M(m)](b)f(y, n)\\
   {} &+ [\alpha_L(y), \alpha_M(n)](a)bf(x,m),
  \end{aligned}$
\end{enumerate}
for all $a,b \in \A$, $x,y \in L$ and $m,n \in M$.

We say that a Lie pairing $f \colon L \times M \to \lle$ is \emph{universal} if for any other Lie--Rinehart pairing
 $g \colon L \times M \to \lle'$ there is a unique Lie--Rinehart homomorphism $\varphi \colon \lle \to \lle'$ making commutative the diagram:
\[
\xymatrix{ L \times M \ar[rr]^-f \ar[rrd]_-g & & \lle \ar[d]^\varphi  \\
                                          & &\lle'}
\]
The Lie--Rinehart algebra $\lle$ is unique up to isomorphism which we will describe as the non-abelian tensor product of $L$ and $M$.

\begin{De}\label{D:tp}
Let $L$ and $M$ be a pair of Lie--Rinehart algebras together with an action of $L$ on $M$ and an action of $M$ on $L$.
 We define the \emph{non-abelian tensor product of $L$ and $M$ in ${\normalfont \lrak}$, $L \otimes M$}, as the Lie--Rinehart $\A$-algebra
  spanned as an $\A$-module by the symbols $x \otimes m$, and subject only to the relations:
\begin{enumerate}
\item $k(x \otimes m) = kx \otimes m = x \otimes km$,
\item $x \otimes (m + n) = x \otimes m + x \otimes n$, \\
	  $(x + y) \otimes m = x \otimes m + y \otimes m$,
\item $[x,y] \otimes m = x \otimes \mbox{}^{y}m - y \otimes \mbox{}^{x}m$, \\
	  $x \otimes [m, n] = \mbox{}^{n}x \otimes m - \mbox{}^{m}x \otimes n$,
\item $[a(x \otimes m), b(y \otimes n)] = -ab(^{m}x \otimes \mbox{}^{y}n) + a \alpha(x \otimes m)(b)(y \otimes n) - \alpha(y \otimes n)(a)b(x \otimes m)$,
\end{enumerate}
for every $k \in K$, $a, b \in \A$, $x,y \in L$ and $m, n \in M$. Here the map $\alpha \colon L \otimes M \to \dka$ is given
 by $\alpha\big(a(x \otimes m)\big):=a[\alpha_L(x), \alpha_M(m)]$.
\end{De}

This way, the map $f \colon L \times M \to L \otimes M$ which sends $(x, m)$ to $x \otimes m$ is a universal
 Lie--Rinehart pairing by construction.

\begin{De}
Two actions $L \times M \to M$ and $M \times L \to L$ are said to be \emph{compatible}
if for all $x,y \in L$ and $m,n \in M$,
\begin{enumerate}
\item $- \alpha_L(^{m}x) = \alpha_M(^{x}m) = [\alpha_L(x), \alpha_M(m)]$,
\item $^{(^{m}x)}n = [n, \mbox{}^{x}m]$,
\item $^{(^{x}m)}y = [y, \mbox{}^{m}x]$.
\end{enumerate}
\end{De}

This is the case, for example, if $L$ and $M$ are both ideals of some Lie--Rinehart algebra
and the actions are given by multiplication. We can see another example of compatible actions when
 $\partial \colon L \to N$ and $\partial' \colon M \to N$ are crossed modules. In this case, $L$ and $M$ act on each other via the action of $N$.
 These actions are compatible.

From this point on we shall assume that all actions are compatible.

\begin{Pro}
Let $\mu \colon  L \otimes M \to L$ and $\nu \colon  L \otimes M \to M$ be the homomorphisms defined on generators by
 $\mu\big(a(x \otimes m)\big) = -a(^{m}x)$ and $\nu\big(a(x \otimes m)\big) = a(^{x}m)$. They are Lie--Rinehart homomorphisms and the following diagram is commutative:
\[
\xymatrix{
L \otimes M \ar[rr]^\nu \ar[d]_\mu \ar[rrd]^\alpha & & M \ar[d]^{\alpha_M}  \\
L \ar[rr]_{\alpha_L} & & {\normalfont \dka}}
\]
\end{Pro}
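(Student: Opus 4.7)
The plan is to apply the universal property of the non-abelian tensor product. By Definition~\ref{D:tp}, for every Lie--Rinehart pairing $g \colon L \times M \to \lle'$ there is a unique Lie--Rinehart homomorphism $L \otimes M \to \lle'$ sending $x \otimes m$ to $g(x,m)$. Thus it suffices to verify that the two maps
\[
g_\mu \colon L \times M \to L, \quad (x,m) \mapsto -\,^{m}x, \qquad g_\nu \colon L \times M \to M, \quad (x,m) \mapsto \,^{x}m,
\]
are Lie--Rinehart pairings; the resulting factoring homomorphisms are then precisely $\mu$ and $\nu$, and they are Lie--Rinehart homomorphisms automatically.

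I would start with $g_\mu$. Pairing axiom (1), $\alpha_L(g_\mu(x,m)) = [\alpha_L(x),\alpha_M(m)]$, is exactly compatibility condition (1). For axiom (2), the $M$-on-$L$ version of action axiom (3), namely $^{m}[x,y] = [\,^{m}x,y] + [x,\,^{m}y]$, rewrites the left-hand side as $-[\,^{m}x,y] - [x,\,^{m}y]$; compatibility (3) with $x$ and $y$ interchanged gives $\,^{(\,^{y}m)}x = [x,\,^{m}y]$, so the right-hand side $-\,^{(\,^{y}m)}x + \,^{(\,^{x}m)}y$ matches. Axiom (3) reduces directly to the $M$-on-$L$ version of action axiom (2), $^{[m,n]}x = \,^{m}(\,^{n}x) - \,^{n}(\,^{m}x)$. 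The treatment of $g_\nu$ is entirely parallel, with the compatibility conditions applied symmetrically.

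The delicate step is pairing axiom (4). I would expand $g_\mu(a(\,^{m}x),\, b(\,^{y}n)) = -\,^{b(\,^{y}n)}(a(\,^{m}x))$ by applying the Leibniz-type rule $^{m'}(al) = a\,^{m'}l + m'(a)l$ coming from action axiom (1), together with the $A$-linearity of the action in its first argument. The purely bracket-like summand reorganises, via compatibilities (2) and (3) and antisymmetry of the $L$-bracket, into $-ab[g_\mu(x,m), g_\mu(y,n)]$, while the two anchor-correction terms are identified, through compatibility (1), with the last two summands on the right-hand side of the pairing axiom. The verification for $g_\nu$ is completely analogous.

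Finally, the commutativity of the diagram is a one-line check on generators: $A$-linearity of $\alpha_L$ and $\alpha_M$ combined with compatibility (1) give
\[
\alpha_L\bigl(\mu(a(x\otimes m))\bigr) = -a\,\alpha_L(\,^{m}x) = a[\alpha_L(x),\alpha_M(m)] = \alpha\bigl(a(x\otimes m)\bigr),
\]
and identically for $\alpha_M\circ\nu$, so both triangles commute. The principal obstacle will be the bookkeeping in axiom (4): no new conceptual idea is needed, but all three compatibility conditions, the two Leibniz-type action axioms, and the $A$-linearity of the brackets must be juggled simultaneously, and signs and anchor terms tracked carefully.
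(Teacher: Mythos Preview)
The paper states this proposition without proof, so there is no argument to compare yours against. Your approach via the universal property of $L\otimes M$ is exactly the natural one: once $g_\mu$ and $g_\nu$ are shown to be Lie--Rinehart pairings, the induced maps are automatically Lie--Rinehart homomorphisms, and the commutativity of the diagram follows on generators from compatibility condition~(1) as you indicate. Your verifications of pairing axioms (1)--(3) are correct, and your outline for axiom~(4) is the right strategy; the only caveat is that the $\A$-linearity of the action in its first argument, which you invoke, is not listed among the three action axioms in this section (it does appear as axiom~(3) in the earlier subsection on actions of a Lie--Rinehart algebra on a Lie $\A$-algebra), so you should make explicit that you are assuming it here.
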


We can relate the Lie--Rinehart tensor product $L \otimes M$ with the tensor product of $L$ and $M$ as an $\A$-module.
 We will denote it by $L\underset{\modu}\otimes  M$ the $K$-module and $\A$-module generated by the symbols $x \otimes m$ subject to the relations
\begin{enumerate}
\item $k(x \otimes m) = kx \otimes m = x \otimes km$,
\item $x \otimes (m + n) = x \otimes m + x \otimes n$, \\
	  $(x + y) \otimes m = x \otimes m + y \otimes m$,
\end{enumerate}
for every $k \in K$, $x,y \in L$ and $m, n \in M$.

\begin{Pro}
The canonical map $L\underset{\modu}\otimes  M \to L \otimes M$ is a $\A$-module homomorphism and is surjective.
 In addition, if $L$ and $M$ act trivially on each other, there is an isomorphism of $\A$-modules:
\[
L \otimes M \cong L^{\ab} \underset{\modu}\otimes   M^{\ab}.
\]
\end{Pro}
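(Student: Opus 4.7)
The canonical map $\varphi \colon L \underset{\modu}\otimes M \to L \otimes M$ sends each generator $x \otimes m$ to the corresponding generator $x \otimes m$. Since the defining relations of $L \underset{\modu}\otimes M$ are literally the relations (1) and (2) of Definition~\ref{D:tp}, the map is well-defined; it is $\A$-linear by construction. Surjectivity is immediate because the elements $x \otimes m$ generate $L \otimes M$ as an $\A$-module.

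For the isomorphism in the trivial-action case, I would begin by extracting the structural consequences of triviality. Compatibility of the actions forces $[\alpha_L(x), \alpha_M(m)] = 0$ for all $x, m$, so the anchor $\alpha$ of $L \otimes M$ vanishes identically. Relation (3) of Definition~\ref{D:tp} collapses to $[x, y] \otimes m = 0$ and $x \otimes [m, n] = 0$, while relation (4) collapses to $[a(x \otimes m), b(y \otimes n)] = 0$. Hence $L \otimes M$ becomes an abelian Lie--Rinehart algebra, and only its underlying $\A$-module structure remains.

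To produce the isomorphism of $\A$-modules, I would invoke the universal property of the Lie--Rinehart pairing underlying $L \otimes M$. Viewing $L^{\ab} \underset{\modu}\otimes M^{\ab}$ as an abelian Lie--Rinehart algebra (zero bracket and zero anchor), the $K$-bilinear assignment $\psi(x, m) := [x] \otimes [m]$ is a Lie--Rinehart pairing: axiom (1) holds by compatibility of the trivial actions, axioms (2) and (3) hold because $[x, y] \in \{L, L\}$ and $[m, n] \in \{M, M\}$ become zero after abelianisation, and (4) is automatic since the target has vanishing bracket and anchor. Universality produces an induced $\A$-linear morphism $\bar\psi \colon L \otimes M \to L^{\ab} \underset{\modu}\otimes M^{\ab}$, which, placed opposite the natural factorisation $L \underset{\modu}\otimes M \twoheadrightarrow L^{\ab} \underset{\modu}\otimes M^{\ab}$ of $\varphi$, yields the candidate inverse. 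One then checks mutual inverseness by tracing through generators. The main obstacle is verifying that the $\A$-module structure on $L \otimes M$, constrained by the many consequences of relation (3) combined with the Lie--Rinehart identity $[ax, y] = a[x, y] - y(a)x$, descends compatibly to $L^{\ab} \underset{\modu}\otimes M^{\ab}$ so that the two maps are genuinely inverses rather than merely identities on the generating symbols.
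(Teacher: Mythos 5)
Your first claim (well-definedness, $\A$-linearity and surjectivity of the canonical map) is correct, and your observation that compatibility plus triviality of the actions kills both the anchor and the bracket of $L \otimes M$, so that only the underlying $\A$-module survives, is a sound preliminary that the paper leaves implicit. The universal-property construction of $\bar\psi \colon L \otimes M \to L^{\ab} \underset{\modu}\otimes M^{\ab}$ is also fine.

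The gap is exactly the point you defer at the end as ``the main obstacle'': you never prove that $\varphi$ factors through $L^{\ab} \underset{\modu}\otimes M^{\ab}$, i.e.\ that $(a[x,y]) \otimes m = 0$ and $x \otimes (a[m,n]) = 0$ hold in $L \otimes M$ for \emph{arbitrary} $a \in \A$. From relation (3) of Definition~\ref{D:tp} and triviality you only obtain $[x,y] \otimes m = 0$, hence $a\bigl([x,y]\otimes m\bigr) = 0$; but $\{L,L\}$ is by definition the $K$-span of all elements $a[x,y]$ with $a \in \A$, and relation (1) identifies $kx \otimes m$ with $k(x \otimes m)$ only for $k \in K$, so the generator $(a[x,y]) \otimes m$ is a genuinely different element from $a\bigl([x,y]\otimes m\bigr)$ and its vanishing is not automatic. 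Without it, the candidate inverse $[x]\otimes[m] \mapsto x \otimes m$ is not known to be well defined, and the injectivity of $\bar\psi$ is unproved. This missing step is precisely the entire content of the paper's proof: one expands $[x,ay]\otimes m$ via relation (3) together with the Lie--Rinehart identity $[x,ay] = a[x,y] + x(a)y$, obtaining
\[
a[x,y] \otimes m = x \otimes a(^ym) + x \otimes y(a)m - ay \otimes {}^xm - x(a)y \otimes m,
\]
and observes that every term on the right dies because the actions are trivial (triviality also forces $y(a)m = 0$ in $M$, via the action axiom $^y(am) = a(^ym) + y(a)m$). You need to supply this computation and its symmetric counterpart for $x \otimes a[m,n]$; once that is in place, the two maps you construct are inverse to each other on generators and the rest of your argument goes through.
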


\begin{proof}
If $L$ acts trivially on $M$ we have that $x(a)m = 0$ for $a \in \A, x \in L$ and $m \in M$. This means that
\[
a[x,y] \otimes m = x \otimes a(^ym) + x \otimes y(a)m - ay \otimes \mbox{}^xm - x(a)y \otimes m = 0
\]
being straightforward the isomorphism.
\end{proof}

\begin{Pro}
The Lie--Rinehart algebras $L \otimes M$ and $M \otimes L$ are isomorphic.
\end{Pro}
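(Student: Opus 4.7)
The plan is to use the universal property of the non-abelian tensor product (Definition~\ref{D:tp}) by constructing a Lie--Rinehart pairing $f \colon L \times M \to M \otimes L$, $f(x, m) := -m \otimes x$. The sign is forced by anchor compatibility, since
\[ \alpha_{M \otimes L}(-m \otimes x) = -[\alpha_M(m), \alpha_L(x)] = [\alpha_L(x), \alpha_M(m)], \]
which matches $\alpha_{L \otimes M}(x \otimes m)$.

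I would then verify the four axioms of a Lie--Rinehart pairing for $f$. Axiom (1) is the anchor computation above. Axioms (2) and (3) follow from the two halves of relation (3) in $M \otimes L$ (in which the roles of $L$ and $M$ are swapped), after factoring out the overall minus sign in the definition of $f$. The main obstacle is axiom (4), the bracket compatibility: expanding the right-hand side using relation (4) of $M \otimes L$ with generators $m \otimes x$ and $n \otimes y$, the anchor contributions cancel correctly against those on the left (their signs match because the anchor on $M \otimes L$ sends $m \otimes x$ to $-[\alpha_L(x), \alpha_M(m)]$), and the verification reduces to establishing
\[ \mbox{}^{x}m \otimes \mbox{}^{n}y + \mbox{}^{y}n \otimes \mbox{}^{m}x = 0 \qquad \text{in } M \otimes L. \]
This is where compatibility of the actions enters essentially. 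Applying relation (3) of $M \otimes L$ to $[n, \mbox{}^{x}m] \otimes y$ and using the compatibility identity $\mbox{}^{(^{x}m)}y = [y, \mbox{}^{m}x]$ yields $\mbox{}^{x}m \otimes \mbox{}^{n}y = n \otimes [y, \mbox{}^{m}x] - [n, \mbox{}^{x}m] \otimes y$. Expanding the first summand via the other half of relation (3) and applying the second compatibility identity $\mbox{}^{(^{m}x)}n = [n, \mbox{}^{x}m]$ produces exactly the required cancellation.

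Once $f$ has been shown to be a Lie--Rinehart pairing, the universal property provides a unique Lie--Rinehart homomorphism $\varphi \colon L \otimes M \to M \otimes L$ extending $x \otimes m \mapsto -m \otimes x$. By the symmetric construction one obtains $\psi \colon M \otimes L \to L \otimes M$, $\psi(m \otimes x) = -x \otimes m$. Both compositions $\psi \varphi$ and $\varphi \psi$ fix the generating sets, so by the uniqueness clause of the universal property (or by direct inspection) they are the identity morphisms, and $\varphi$ is the desired isomorphism of Lie--Rinehart algebras.
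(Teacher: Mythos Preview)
Your approach is essentially the paper's: construct a Lie--Rinehart pairing $L \times M \to M \otimes L$ and invoke the universal property in both directions to obtain mutually inverse homomorphisms. The paper's version is terser and writes simply $(x,m) \mapsto m \otimes x$, omitting both the sign you introduce (which, as you observe, is forced by anchor compatibility) and the compatibility computation for axiom~(4); your treatment is the more careful of the two.
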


\begin{proof}
The map $f \colon  L \times M \to M \otimes L$ which sends $(x, m) \to m \otimes x$ is a Lie--Rinehart pairing, then by the
 universal property of $L \otimes M$ there is a Lie--Rinehart homomorphism $L \otimes M \to M \otimes L$. In a similar way, we can
construct the inverse $M \otimes L  \to L \otimes M$ and establish an isomorphism.
\end{proof}

\begin{Pro}
Consider the following short exact sequence of Lie--Rinehart algebras
\[
\xymatrix{
 L \ar[r]^f & M \ar[r]^g & N ,}
\]
and assume that $P$ is a Lie--Rinehart algebra which acts compatibly on $L$, $M$ and $N$, and the Lie--Rinehart algebras
 $L, M, N$  act compatibly on $P$. Suppose also that the Lie--Rinehart morphisms $f$ and $g$ conserve these actions, i.e.,
  $f(^pm) = \mbox{}^pf(m)$ and $\mbox{}^mp = \mbox{}^{f(m)}p$. Then, the following sequence is exact
\[
\xymatrix{
L \otimes P \ar[r]^-{f \otimes 1} & M\otimes P \ar[r]^-{g \otimes 1} & N\otimes P .}
\]
\end{Pro}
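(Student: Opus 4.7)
The plan is to verify exactness at both spots of the sequence, with the main work at the middle term. First, $g \otimes 1$ is surjective: by surjectivity of $g$, every generator $n \otimes p$ of $N \otimes P$ can be written as $g(m) \otimes p = (g \otimes 1)(m \otimes p)$, and the $\A$-module span of such elements is all of $N \otimes P$. Likewise, the composition vanishes, since $gf = 0$ forces $(g \otimes 1)(f \otimes 1)(l \otimes p) = gf(l) \otimes p = 0 \otimes p = 0$ on generators, and this extends to the whole tensor product. Hence $\operatorname{Im}(f \otimes 1) \subseteq \ker(g \otimes 1)$.

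For the reverse inclusion, I would first argue that $I := \operatorname{Im}(f \otimes 1)$ is an ideal of $M \otimes P$, so that the quotient $(M \otimes P)/I$ carries a well-defined Lie--Rinehart structure. This uses relation (4) of Definition \ref{D:tp}: a bracket $[a(m \otimes p), b(f(l) \otimes p')]$ expands into a sum of generators of the form $f(\cdot) \otimes (\cdot)$ together with $\A$-rescalings of $f(l) \otimes p'$, all lying in $I$, because the compatibility hypothesis $f({}^p m) = {}^p f(m)$ and $\mbox{}^{m}p = \mbox{}^{f(m)}p$ identifies $\mbox{}^{p}f(l) = f(\mbox{}^{p}l)$. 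The $\A$-closure of $I$ follows from the $\A$-linearity of $f \otimes 1$.

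Next I would construct a Lie--Rinehart pairing $h \colon N \times P \to (M \otimes P)/I$ by setting $h(n, p) = m \otimes p + I$, where $m$ is any lift of $n$ under $g$. Well-definedness is the key verification: if $g(m) = g(m')$ then $m - m' = f(l)$ for some $l \in L$, whence $m \otimes p - m' \otimes p = f(l) \otimes p \in I$. Bilinearity is immediate, and the four Lie--Rinehart pairing axioms (1)--(4) transfer from the action-preservation hypotheses on $f, g$ and the corresponding relations holding in $M \otimes P$ modulo $I$; this is the step requiring the most bookkeeping but no new ideas once the compatibility conditions are unpacked.

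By the universal property, $h$ induces a Lie--Rinehart homomorphism $\bar h \colon N \otimes P \to (M \otimes P)/I$ sending $n \otimes p$ to $[m \otimes p]$. The canonical quotient map $q \colon M \otimes P \to (M \otimes P)/I$ factors through $g \otimes 1$ (since $I = \ker q \supseteq$ kernel needed), and the composition $\bar h \circ (g \otimes 1) = q$ on generators. Conversely $(g \otimes 1)$ descends to the quotient and is inverse to $\bar h$. Thus $\ker(g \otimes 1) = I = \operatorname{Im}(f \otimes 1)$, and the sequence is exact at $M \otimes P$. The main obstacle is the verification that $\operatorname{Im}(f \otimes 1)$ is genuinely an ideal in the non-abelian Lie--Rinehart sense (closed under both bracket and $\A$-multiplication), and that $h$ satisfies axiom (4) of a Lie--Rinehart pairing; both rely crucially on the compatibility of the $P$-actions with $f$ and $g$.
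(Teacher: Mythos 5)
Your proposal is correct and follows essentially the same route as the paper: show $g\otimes 1$ is surjective and $\Im(f\otimes 1)\subseteq\Ker(g\otimes 1)$, verify that $\Im(f\otimes 1)$ is an ideal using the action-compatibility of $f$, and then exhibit the isomorphism $(M\otimes P)/\Im(f\otimes 1)\cong N\otimes P$ by constructing a Lie--Rinehart pairing $N\times P\to (M\otimes P)/\Im(f\otimes 1)$ from a lift along $g$ and invoking the universal property. Your write-up is in fact somewhat more explicit than the paper's (e.g.\ on well-definedness of the lift via exactness at $M$), but there is no difference in method.
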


\begin{proof}
Since $f$ and $g$ conserve the actions, it is easy to see that $f \otimes 1$ and $g \otimes 1$ are Lie--Rinehart algebra morphisms.
 Furthermore, the morphism $g \otimes 1$ is clearly surjective, and $\Im (f\otimes 1) \subset \Ker (g\otimes 1)$.
  Since $fg = 0$, we have that $f(x)(a) = 0$ for every $a \in \A$ and $x \in L$.
   This means that $(f \otimes 1)(x\otimes p)(a) = [\alpha_M\big(f(x)\big), \alpha_P(p)](a) = 0$.
   Moreover, $\Im (f\otimes 1)$ is an $\A$-module and conserves the Lie bracket since $f$ and $g$ conserve the actions, so $\Im (f\otimes 1)$ is an ideal.
Then to proof the other inclusion, we will show that $M\otimes P / \Im (f\otimes 1) \cong N \otimes P$.
Since $\Im (f\otimes 1) \subset \Ker (g\otimes 1)$ we have a natural epimorphism $\phi \colon  M\otimes P / \Im (f\otimes 1) \to N \otimes P$.
 Now we define the map $\varphi \colon  N \times P \to M \otimes P / \Im(f\otimes 1)$ such that $\varphi(n,p) = m \otimes p + \Im(f\otimes 1)$
 where $m$ is such that $f(m) = n$. It follows that it is a Lie pairing, so by the universality of the tensor product,
  there exists a unique Lie--Rinehart morphism $\bar{\varphi} \colon  N\otimes P \to M \otimes P / \Im(f\otimes 1)$,
   and it is straightforward that $\phi$ and $\varphi$ are inverse morphisms.
\end{proof}

\begin{Pro}
Given a perfect Lie--Rinehart algebra $L$, the non-abelian tensor product $L \otimes L$ where the action of $L$ on $L$ is the Lie bracket, with the additional relation
\[
(a[x, y] \otimes b[x', y']) = ab([x,y] \otimes [x', y']) - b[x',y'](a)(x \otimes y) + a[x,y](b)(x' \otimes y'),
\]
denoted by $L {\hat{\otimes}} L$, is the universal central extension of $L$.
\end{Pro}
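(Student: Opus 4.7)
The plan is to verify, via the characterization of Theorem~\ref{T:ucemain}, that the map $\mu \colon L \,\hat{\otimes}\, L \to L$ determined by $a(x \otimes y) \mapsto a[x,y]$ is a universal central extension. Surjectivity is immediate from perfectness of $L$; I first show $\mu$ is a central extension, and then for each central extension $p \colon E \to L$ construct the unique factorization $f \colon L \,\hat{\otimes}\, L \to E$ over $L$.

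\textbf{Centrality.} For $\xi = \sum_i a_i(x_i \otimes y_i) \in \Ker \mu$ and $\eta = b(y \otimes n) \in L \,\hat{\otimes}\, L$, expand $[\xi,\eta]$ by relation~(4) of Definition~\ref{D:tp}:
\[
[\xi, \eta] = \sum_i \Bigl( a_i b\,([x_i, y_i] \otimes [y, n]) + a_i [x_i, y_i](b)(y \otimes n) - [y, n](a_i)\, b (x_i \otimes y_i) \Bigr).
\]
For each $i$, rewriting $a_i b\,([x_i, y_i] \otimes [y, n])$ by the additional relation of $L \,\hat{\otimes}\, L$ produces $a_i [x_i, y_i] \otimes b[y, n]$ plus two correction terms that exactly cancel the remaining two summands. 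The residue is $\bigl(\sum_i a_i [x_i, y_i]\bigr) \otimes b[y, n] = \mu(\xi) \otimes b[y, n] = 0$ by $K$-bilinearity, so $\Ker \mu \subset \zena(L \,\hat{\otimes}\, L)$.

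\textbf{Existence and descent.} Given a central extension $p \colon E \to L$, define $g \colon L \times L \to E$ by $g(x, y) := [\tilde x, \tilde y]$ for arbitrary lifts $\tilde x, \tilde y$; well-definedness is the central trick (Lemma~\ref{ctrick}(a)), and the four Lie--Rinehart pairing axioms reduce to Jacobi in $E$, the Leibniz rule, and the identification $\tilde x(a) = x(a)$. The universal property of the non-abelian tensor product then yields $\tilde g \colon L \otimes L \to E$. To verify that $\tilde g$ kills the additional relation, take the lift $\widetilde{[x,y]} = [\tilde x, \tilde y]$ (central trick again) and expand
\[
\tilde g(a[x,y] \otimes b[x',y']) = [a[\tilde x, \tilde y],\, b[\widetilde{x'}, \widetilde{y'}]]
\]
via the Leibniz rule in $E$: the result is precisely $\tilde g$ applied to the right-hand side of the additional relation. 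Hence $\tilde g$ descends to $f \colon L \,\hat{\otimes}\, L \to E$ with $pf = \mu$.

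\textbf{Uniqueness and main obstacle.} By Lemma~\ref{ctrick}(b), uniqueness of $f$ already holds on the commutator subalgebra, so it remains to show $L \,\hat{\otimes}\, L$ is perfect. Since derivations of $\A$ kill $1$, relation~(4) of Definition~\ref{D:tp} gives $[u \otimes v, u' \otimes w] = [u, v] \otimes [u', w]$; combined with the identity $[a(u \otimes v), u' \otimes w] = a([u, v] \otimes [u', w]) - [u', w](a)(u \otimes v)$ this places each scalar multiple $[u', w](a)(u \otimes v)$ in $\{L \,\hat{\otimes}\, L,\, L \,\hat{\otimes}\, L\}$. Writing $x, y \in L$ as sums of commutators (perfectness of $L$) and expanding $x \otimes y$ via the additional relation then puts every generator in the commutator subalgebra, so $L \,\hat{\otimes}\, L$ is perfect. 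The most delicate point of the whole argument is the descent step: matching the Leibniz expansion of $[a[\tilde x, \tilde y], b[\widetilde{x'}, \widetilde{y'}]]$ term by term with the right-hand side of the additional relation requires two simultaneous uses of the central trick, to control both the choice of lift of $[x,y]$ and the evaluations $[\tilde x, \tilde y](b) = [x,y](b)$. Once this and perfectness are in hand, Theorem~\ref{T:ucemain}(2) delivers the conclusion.
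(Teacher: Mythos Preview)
Your proposal is correct and follows essentially the same route as the paper: define the pairing $L \times L \to E$ via brackets of lifts (the paper fixes a set-theoretic section, you use arbitrary lifts, which is equivalent by the central trick), invoke the universal property of $L \otimes L$, check that the resulting map kills the additional relation, and conclude uniqueness via Lemma~\ref{ctrick}(b). Your write-up is in fact more careful than the paper's in two places: you spell out the centrality computation that the paper calls ``routine'', and you explicitly verify that $L\,\hat{\otimes}\,L$ is perfect, a point the paper glosses over by writing ``since $L$ is perfect'' when what is actually needed for Lemma~\ref{ctrick}(b) is perfectness of the \emph{domain}.
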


\begin{proof}
It is routine to check that $L {\hat{\otimes}} L \to L$ is a central extension. To see the universality, given a central extension $p: M \to L$,
 we pick a section in Set $s: L \to M$. We define now a map $f: L \times L \to M$ by $f(x, y) = [s(x), s(y)]$.
  Doing the same trick as in Proposition \ref{lastprop}, we see that is a Lie--Rinehart pairing,
   so it can be extended to $L \otimes L \to M$. It is easy to see that the map vanishes in the elements of the additional relation.
    Since $L$ is perfect, we saw in Lemma \ref{ctrick} that this map is unique.
\end{proof}


\end{document}